\tikzset{
	> = stealth,
	every pin/.style = {pin edge = {}},
	flow/.style = {decoration = {markings, mark=at position #1 with {\arrow{>}}},
		postaction = {decorate}
	},
	flow/.default = 0.5,
	main/.style = {color=#1, line width=0.5pt, line cap=round, line join=round},
	main/.default = black,
	fontscale/.style={font=\relsize{#1}},
}
\newtheorem{theorem}{Theorem}[section]
\newtheorem{proposition}[theorem]{Proposition}
\newtheorem{lemma}[theorem]{Lemma}
\newtheorem{corollary}[theorem]{Corollary}
\theoremstyle{definition}
\theoremstyle{remark}
\newtheorem{remark}[theorem]{Remark}
\newtheorem{remarks}[theorem]{Remarks}
\numberwithin{equation}{section}
\newcommand{\diff}{\,\mathrm{d}}
\renewcommand{\epsilon}{\varepsilon}
\newcommand{\N}{\mathbb{N}}
\renewcommand{\phi}{\varphi}
\newcommand{\R}{\mathbb{R}}
\newcommand{\Sph}{\mathbb{S}}
\DeclareMathOperator{\dist}{dist}
\DeclareMathOperator{\Ric}{Ric}
\DeclareMathOperator{\vol}{vol}
\let\oldtocsection=\tocsection
\let\oldtocsubsection=\tocsubsection
\let\oldtocsubsubsection=\tocsubsubsection
\renewcommand{\tocsection}[2]{\hspace{0em}\oldtocsection{#1}{#2}}
\renewcommand{\tocsubsection}[2]{\hspace{1em}\oldtocsubsection{#1}{#2}}
\renewcommand{\tocsubsubsection}[2]{\hspace{2em}\oldtocsubsubsection{#1}{#2}}
\newif\ifTodo
\begin{document}

	\begin{titlepage}
		\huge \title[An almost-almost-Schur lemma]{An almost-almost-Schur lemma on the 3-sphere}
		\vspace{7cm}
	\end{titlepage}
		
	\author{Tobias K\"onig}
	\address[Tobias K\"onig]{Institut für Mathematik, 
Goethe-Universität Frankfurt, 
Robert-Mayer-Str. 10, 60325 Frankfurt am Main, Germany}
\email{koenig@mathematik.uni-frankfurt.de}
	
	\author{Jonas W.~Peteranderl}
	
	\address[Jonas W.~ Peteranderl]{Mathematisches Institut, Ludwig-Maximilians-Universit\"at M\"unchen, Theresienstr.~39, 80333 M\"unchen, Germany}	\email{peterand@math.lmu.de}
	
	\begin{abstract}

       In the conformal class of the standard metric on the $3$-sphere, we prove a quantitative refinement of the Andrews--De~Lellis--Topping inequality in terms of a two-term distance to the set of minimizing conformal factors. This inequality is itself a stability result for the well-known Schur lemma and is therefore referred to as \textit{almost-Schur lemma}. Hence, our stability result may be viewed as an \textit{almost-almost-Schur lemma}.

        As a consequence, we deduce via interpolation the quantitative stability of an entire family of nonlinear Yamabe-type inequalities, including an inequality for the total volume-normalized $\sigma_2$-curvature $\mathcal F_2$. This extends a recent result by Frank and the second author for $d > 4$ to the case $d=3$. While the standard metric minimizes $\mathcal F_2$ if $d > 4$, it maximizes $\mathcal F_2$ if $d=3$. This is the main challenge in treating the case $d=3$ as it turns the related functional inequality into a \textit{reverse} Sobolev-type inequality.
\end{abstract}

\date{January 5, 2026}
\thanks{\copyright\, 2026 by the authors. This paper may be reproduced, in its entirety, for non-commercial purposes. 
}
	
	\maketitle
	\setcounter{page}{1}

\section{Introduction and main results}

\subsection{An almost-Schur lemma}
\label{subsec_almost_schur}

The well-known Schur lemma states that if a Riemannian manifold $(M,g)$ of dimension $d \geq 3$ is Einstein, that is,  
if its Ricci curvature tensor $\Ric$ and its scalar curvature $R$ satisfy $$\Ric = \frac{R}{d} g 
\,,$$ then $R$ must be constant.

De Lellis and Topping \cite{DeLellis2012} proved that for every closed Riemannian manifold $(M,g)$ of dimension $d \geq 3$ with non-negative Ricci curvature, one has 
\begin{equation}
    \label{eq:ADT_intro}
    \int_M \left|\Ric - \frac{\overline{R}}{d} g\right|^2  \diff \vol_g \leq \frac{d^2}{(d-2)^2} \int_M  \left|\Ric - \frac{R}{d} g\right|^2 \diff \vol_g\,,
\end{equation}
where $\overline{R} \coloneqq \vol(g)^{-1} \int_M R \diff\vol_g$ and $\diff \vol_g$ denotes the volume form on $(M,g)$. The constant is best possible. This inequality is a quantitative refinement of Schur's lemma and was therefore termed \emph{almost-Schur lemma} by the authors of \cite{DeLellis2012}; see also \cite[Corollary B.20]{ChowLuNi2006} for an independent but less known version of this result by Andrews.

Ge and Wang observed in \cite{Ge2012,Ge2013b} that the same refinement of Schur's lemma remains valid in dimension $d=3,4$ if instead of the Ricci curvature only the scalar curvature is assumed to be nonnegative. Their proof relies on a reformulation of the inequality in terms of the \textit{$\sigma_k$-curvatures} for $k=1,2$.

\subsection{An extension by Ge and Wang}

Let $(M,g)$ be a $d$-dimensional compact Riemannian manifold with $g$ conformally equivalent to a given background metric $g_*$. As an extension of the scalar curvature, Viaclovsky \cite{Viaclovsky2000} introduced a family of scalar curvatures, known as $\sigma_k$-curvatures, which are given by the $k$-th elementary symmetric functions of the Schouten tensor. We are interested in the cases $k=1,2$. In terms of the scalar and Ricci curvature, they have the form 
\begin{equation*}
    \sigma_1^g = \frac{1}{2(d-1)}R^g\qquad \text{and} \qquad \sigma_2^g = \frac{1}{2(d-2)^2} \left( \frac{d}{4(d-1)} (R^g)^2  - |\Ric^g|^2\right). 
\end{equation*}

The total $\sigma_1$-curvature and total $\sigma_2$-curvature, normalized by volume, are defined as
\begin{equation*}
    \mathcal F_1[g] \coloneqq \frac{1}{\operatorname{vol}(g)^\frac{d-2}{d}} \int_{M} \sigma_1^g \, \mathrm d \operatorname{vol}_g \qquad \text{and}\qquad  \mathcal F_2[g] \coloneqq \frac{1}{\operatorname{vol}(g)^\frac{d-4}{d}} \int_{M} \sigma_2^g \, \mathrm d \operatorname{vol}_g \,,
\end{equation*}
 respectively. These geometric quantities are conformally invariant, in the sense that $$\mathcal F_1[\Psi^*g]=\mathcal F_1[g]\qquad \text{and}\qquad \mathcal F_2[\Psi^*g]=\mathcal F_2[g]$$ for all conformal diffeomorphisms $\Psi$ of $(M, g_*)$, also known as \textit{M\"obius transformations}. It is easily checked that \eqref{eq:ADT_intro} can be equivalently stated as 
\begin{equation}
    \label{eq:ADT_intro_sigma}
   (\mathcal F_1[g])^2 \geq \frac{2d}{d-1} \mathcal F_2[g]\,.
\end{equation}

We now specialize to the case $d=3$. In this case, Ge and Wang \cite{Ge2013b} showed that the inequality remains valid even under the weaker requirement that the scalar curvature is nonnegative, and equality holds if and only if $g$ equals $g_*$ up to M\"obius transformations. Under stronger assumptions, inequality \eqref{eq:ADT_intro_sigma} was already known as part of a larger family of conformal quermassintegral-type inequalities in \cite{Guan2004}. 

Our goal in the present paper is to prove a quantitative version of the almost-Schur lemma for $M=\mathbb S^3$ and $g_*$ the standard round metric. Put differently, we ask whether a positive scalar curvature metric $g$ that is conformal to $g_*$ is almost $g_*$ (up to M\"obius transformations) if we have almost equality in \eqref{eq:ADT_intro_sigma}. Since we are restricting to conformal metrics, it is only natural to consider notions of closeness that involve the conformal factor. As the conformal factor is a (positive) function on the sphere, our next step is to introduce yet another formulation of \eqref{eq:ADT_intro_sigma} as integrals over the conformal factor.

\subsection{Functional formulation on $\mathbb S^3$} The metric $g$ is conformally equivalent to $g_*$ if it can be written as the product of a positive, smooth function and $g_*$. A direct computation shows that the parametrization $g=v^{4k/(d-2k)}g_*$, $v>0$, turns the total $\sigma_k$-curvature into a $2k$-homogeneous functional. For $k=1$, this gives the well-known $H^1$-norm in the Sobolev inequality. 

Let $M=\mathbb S^3$ as before. Since the compactness argument in our proof of stability relies on a reduction to the Yamabe inequality $\mathcal F_1[g] \geq \mathcal F_1[g_*]$ (for metrics $g$ conformal to $g_*$) similar to \cite{Frank2024b}, we consider two different parametrizations
$$g=w^{4}g_*\qquad \text{and}\qquad g=u^{-8}g_*$$
for the $\sigma_1$-curvature inequality and the $\sigma_2$-curvature inequality, respectively. As discussed, the functions $w$ and $u$ are smooth and positive on $\mathbb S^3$.
It is well known that the two components of $\mathcal F_1$ transform as 
\[ \operatorname{vol}(w^4 g_*) = \int_{\Sph^3} w^6 \, \mathrm d \omega \quad \text{ and } \quad \int_{\Sph^3} \sigma_1^{w^4 g_*} \, \mathrm d \operatorname{vol}_{w^4 g_*} = 2 \int_{\Sph^3} \left( |\nabla w|^2 + \frac{3}{4} w^2\right) \, \mathrm d \omega\,, \] where $\mathrm d\omega$ is the volume form and $\nabla$ the covariant derivative on $(\mathbb S^3,g_*)$.
Hence, we obtain
\begin{equation*}
    \label{F1[w] definition}
    \mathcal F_1[w^4 g_*] = \frac{2}{\left( \int_{\Sph^3} w^6 \, \mathrm d \omega \right)^\frac{1}{3}} \int_{\Sph^3} \left( |\nabla w|^2 + \frac{3}{4} w^2 \right) \, \mathrm d\omega \eqqcolon F_1[w]\,. 
\end{equation*}
While $\mathcal F_1[g]$ is invariant under M\"obius transformations, which act on the metric $g$ via pullback $\Psi^*g$, this translates for $F_1[w]$ to invariance under M\"obius transformations, which act on $w$
via $$[w]_\Psi\coloneqq J_\Psi^{\frac 16}w\circ \Psi\,,$$ where $J_\Psi$ denotes the Jacobian of $\Psi$. Thus, conformal invariance means for the functional $F_1$ that $F_1[[w]_\Psi]=F_1[w]$.
As a consequence, we can write the Yamabe inequality as \begin{equation}\label{eq:Sob}
    F_1[w]\geq F_1[1]\,,
\end{equation} and equality holds if and only if $w=\lambda [1]_\Psi$ for some $\lambda>0$ and some M\"obius transformation $\Psi$. Note that \eqref{eq:Sob} is nothing else but the (sharp) Sobolev inequality after stereographic projection.

Turning to the functional reformulation of $\mathcal F_2$, a more lengthy but similar computation shows that
\[ \operatorname{vol}(u^{-8} g_*) = \int_{\Sph^3} u^{-12} \, \mathrm d \omega \quad \text{ and } \quad \int_{\Sph^3} \sigma_2^{u^{-8} g_*} \, \mathrm d \operatorname{vol}_{u^{-8} g_*} = \int_{\Sph^3} e_2(u) \, \mathrm d \omega\,,  \]
where 
\begin{equation}
    \label{eq:e2(u)_definition}
    e_2(u) \coloneqq -64 \left(\sigma_1(u) + \frac{1}{2} |\nabla u|^2 + \frac{1}{32} u^2 \right) |\nabla u|^2 + \frac{3}{4} u^4\,
\end{equation} and 
\begin{equation}
    \label{eq:sigma_1(u)}
    \sigma_1(u) \coloneqq \frac{1}{8}\Delta (u^2) - |\nabla u|^2 + \frac{3}{32} u^2 = \frac 1{16} u^{-6}\sigma_1^{u^{-8}g_*}\,; 
\end{equation}
see \cite{Case2020}. The latter equality follows by direct computation and tells us that $\sigma_1(u)$ can be regarded as scalar curvature (up to a positive function). Let us emphasize that the sign -- in comparison to \cite{Frank2024b} -- changed in front of the bracket in \eqref{eq:e2(u)_definition} and the second derivative in \eqref{eq:sigma_1(u)} changed. This observation has severe implications for the behavior of optimizing sequences.
{We then obtain}
\begin{equation*}
    \label{F2[u] definition}
    \mathcal F_2[u^\frac{8}{d-4} g_*] =  \left( \int_{\Sph^3} u^{-12} \, \mathrm d \omega \right)^{\frac{1}{3}} \int_{\Sph^3} e_2(u) \, \mathrm d \omega \eqqcolon F_2[u] \,.
\end{equation*}
Similarly to the Yamabe inequality but with other exponents due to the difference in parametrization, a M\"obius transformation acts on $u$ via $$(u)_\Psi\coloneqq J_{\Psi}^{-\frac 1{12}} u\circ \Psi\,,$$ and conformal invariance becomes
$F_2[(u)_\Psi]=F_2[u]$.

In summary, using $w=u^{-2}$, inequality \eqref{eq:ADT_intro_sigma} 
can be expressed as 
\begin{equation}
    \label{eq:ADT_intro_functional}
    \tag{ADT}
    \frac{F_2[u]}{F_1[u^{-2}]^2} \leq \frac{F_2[1]}{F_1[1]^2} \qquad \text{ for all $u \in C^\infty(\mathbb S^3)$ with $u > 0$ and $\sigma_1(u) > 0$\,, }
\end{equation}
and equality holds in \eqref{eq:ADT_intro_functional} if and only if $u = \lambda (1)_\Psi$ for $\lambda>0$ and $\Psi$ Möbius transformation. Because of its equivalence with \eqref{eq:ADT_intro} (via \eqref{eq:ADT_intro_sigma}) and in view of the discussion in Subsection \ref{subsec_almost_schur}, we shall refer to \eqref{eq:ADT_intro_functional} as \emph{Andrews--De~Lellis--Topping inequality}.

\subsection{An almost-almost-Schur lemma}

With the above functional notation at hand, we can state our main stability result for inequality \eqref{eq:ADT_intro_functional}.

\begin{theorem}[Quantitative stability for \eqref{eq:ADT_intro_functional}]\label{thm:rev}
	There is a constant $c_{ADT}>0$ such that for all $u\in C^\infty(\mathbb S^3)$ with $u>0$ and $\sigma_1(u)>0$ we have
	\begin{equation}
		\label{eq:quantstab}
		\frac{F_2[1]}{F_1[1]^2} - \frac{F_2[u]}{F_1[u^{-2}]^2}  \geq c_{ADT}
		\inf_{\lambda,\Psi} \left( \|\lambda \, (u)_{\Psi}-1\|_{W^{1,2}(\mathbb S^{3})}^2 + \|\lambda \, (u)_{\Psi}-1\|_{W^{1,4}(\mathbb S^{3})}^4 \right),
	\end{equation}
	where the infimum is taken over all $\lambda\in \R$ and M\"obius transformations $\Psi:\mathbb S^3\to\mathbb S^3$.
\end{theorem}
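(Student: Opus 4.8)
The plan is to combine a global compactness argument with a local expansion of the deficit near the manifold of optimizers, following the overall strategy of \cite{Frank2024b} but now crucially exploiting the sign in \eqref{eq:e2(u)_definition}--\eqref{eq:sigma_1(u)} that is opposite to the one encountered there. Write $\mathcal{D}[u]\coloneqq \frac{F_2[1]}{F_1[1]^2}-\frac{F_2[u]}{F_1[u^{-2}]^2}$ for the deficit, and recall that $\frac{F_2[1]}{F_1[1]^2}=\frac13$. Since $\mathcal{D}$ and the right-hand side of \eqref{eq:quantstab} are invariant under $u\mapsto\lambda u$ and $u\mapsto(u)_\Psi$, it suffices to bound $\mathcal{D}[u]$ from below by the distance of $u$ to the orbit $\mathcal{M}\coloneqq\{\lambda(1)_\Psi\}$. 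I would treat two regimes separately. If this distance, and hence (by the rigidity in \eqref{eq:ADT_intro_functional}) the deficit $\mathcal{D}[u]$, is bounded below by a fixed $\varepsilon_0>0$, then \eqref{eq:quantstab} follows with a small enough constant from a compactness argument on a fundamental domain for the $\lambda$- and $\Psi$-actions, the two norms in \eqref{eq:quantstab} being kept finite by means of the Yamabe inequality \eqref{eq:Sob}. The main regime is $\mathcal{D}[u]\le\varepsilon_0$.

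\emph{Step 1: reduction to a neighbourhood of $\mathcal{M}$.} Here the key algebraic fact is that the constraint $\sigma_1(u)>0$ is equivalent to $\Delta(u^2)+\tfrac34 u^2>8|\nabla u|^2$ and, inserted into \eqref{eq:e2(u)_definition}, yields the pointwise bound $e_2(u)\le -32|\nabla u|^4-2u^2|\nabla u|^2+\tfrac34 u^4$. Together with $F_1[u^{-2}]\ge F_1[1]$ from \eqref{eq:Sob} this lets one run the compactness argument of \cite{Frank2024b}: a small ADT-deficit forces a small Yamabe deficit $F_1[u^{-2}]-F_1[1]$, whence by the Bianchi--Egnell-type stability of \eqref{eq:Sob} the function $u^{-2}$ is $W^{1,2}$-close to $\lambda'(1)_{\Psi'}$ for suitable $\lambda',\Psi'$; since $t\mapsto t^{-1/2}$ is smooth near positive constants, applying the inverse transformation produces $v\coloneqq\lambda(u)_\Psi$ with $\|v-1\|_{W^{1,2}(\mathbb S^3)}$ as small as we wish.

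\emph{Step 2: gauge-fixing and Taylor expansion.} Using the implicit function theorem, refine the choice of $\lambda,\Psi$ so that $\phi\coloneqq v-1$ is $L^2$-orthogonal to the span of the constants and of the first nontrivial eigenspace of $-\Delta$ on $\mathbb S^3$ (these are the generators of the dilations and of the non-compact part of the Möbius group, i.e.\ $T_1\mathcal{M}$). Expanding
\begin{equation*}
\mathcal{D}[u]=\mathcal{D}[v]=Q_2[\phi]+C_3[\phi]+Q_4[\phi]+O\bigl(\|\phi\|^5\bigr)\,,
\end{equation*}
one computes the second variation; the $\int_{\mathbb S^3}|\nabla^2\phi|^2\,\mathrm d\omega$-contributions produced by $\sigma_1(u)$ in \eqref{eq:e2(u)_definition} cancel exactly, so that $Q_2$ is a first-order quadratic form, in fact a positive multiple of $\|\nabla\phi\|_{L^2}^2-3\|\phi\|_{L^2}^2$. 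Since $\phi$ is orthogonal to the eigenvalues $0$ and $3$ of $-\Delta$, the spectral gap gives $\|\nabla\phi\|_{L^2}^2\ge 8\|\phi\|_{L^2}^2$, hence $Q_2[\phi]\ge c_1\|\phi\|_{W^{1,2}(\mathbb S^3)}^2$ for an explicit $c_1>0$. The quartic part $Q_4$ is nonnegative, and one shows it controls $\|\nabla\phi\|_{L^4(\mathbb S^3)}^4$ from below up to lower-order terms; that such control must hold is already visible from the high-frequency perturbations, for which $\mathcal{D}[u]\asymp\|\phi\|_{W^{1,2}}^2+\|\nabla\phi\|_{L^4}^4$. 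Finally $C_3[\phi]$, after integration by parts and Bochner's identity, is a positive multiple of $\int_{\mathbb S^3}\Delta\phi\,|\nabla\phi|^2\,\mathrm d\omega$ plus terms of the type $\int\phi^3$ and $\int\phi\,|\nabla\phi|^2$; the latter are harmless, since $|\int\phi^3|\le C\|\phi\|_{W^{1,2}}^3$ and $|\int\phi\,|\nabla\phi|^2|\le\|\phi\|_{L^2}\|\nabla\phi\|_{L^4}^2\le\tfrac12\|\phi\|_{W^{1,2}}^2+\tfrac12\|\nabla\phi\|_{L^4}^4$.

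\emph{Step 3: the constraint, and the main obstacle.} The term $\int_{\mathbb S^3}\Delta\phi\,|\nabla\phi|^2\,\mathrm d\omega$ is the one contribution not controlled a priori by the norms on the right of \eqref{eq:quantstab}, and dealing with it is where the constraint $\sigma_1(u)>0$ is indispensable. For $v$ near $1$, the inequality $\Delta(v^2)+\tfrac34 v^2>8|\nabla v|^2$ yields the one-sided pointwise bound $(\Delta\phi)_-\le\tfrac38(1+\phi)$, where $(\Delta\phi)_-=\max(-\Delta\phi,0)$, and since $\int_{\mathbb S^3}\Delta\phi\,\mathrm d\omega=0$ this gives
\begin{equation*}
\int_{\mathbb S^3}(\Delta\phi)_-\,|\nabla\phi|^2\,\mathrm d\omega\le\tfrac38\|\nabla\phi\|_{L^2}^2+\eta\,\|\phi\|_{W^{1,2}}^2+\eta\,\|\nabla\phi\|_{L^4}^4\,.
\end{equation*}
Because $\int\Delta\phi\,|\nabla\phi|^2$ enters $\mathcal{D}$ with a \emph{positive} coefficient — precisely the effect of the sign flip relative to \cite{Frank2024b} — only the lower bound $\int\Delta\phi\,|\nabla\phi|^2\ge-\int(\Delta\phi)_-|\nabla\phi|^2$ is needed, and the loss it produces is governed by the constant $\tfrac38$; the estimate closes because this $\tfrac38$ is outweighed by the coercivity of $Q_2$, which in turn comes from the spectral gap value $8$. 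Bookkeeping the (explicit) constants, for $\|\phi\|_{W^{1,2}}$ small enough one obtains $\mathcal{D}[u]\ge c\bigl(\|\phi\|_{W^{1,2}(\mathbb S^3)}^2+\|\nabla\phi\|_{L^4(\mathbb S^3)}^4\bigr)$, which together with $\|\phi\|_{L^4}\le C\|\phi\|_{W^{1,2}}$ gives \eqref{eq:quantstab} in this regime. I expect the main obstacle to be carrying the second- and higher-variation computations far enough — establishing the cancellation of the $\int|\nabla^2\phi|^2$-terms, the precise form and coercivity of $Q_2$ and of $Q_4$, and that after all integrations by parts the only cubic term escaping the two target norms is $\int\Delta\phi\,|\nabla\phi|^2$ with the favourable sign — so that the pointwise consequence of $\sigma_1(u)>0$ can close the argument, with the numerology (the value $8$ against the value $\tfrac38$) working out only because $\phi$ is orthogonal to the first harmonics. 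A subsidiary difficulty is the a priori control of $F_2[u]$ underlying Step 1.
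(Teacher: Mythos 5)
Your overall architecture (global compactness plus local expansion) is the right one, but note that the paper itself does not attack the quotient directly: it first proves Theorem \ref{thm:revsigma} for the deficit $F_2[1]-F_2[u]$ and then obtains the present theorem by interpolation with \eqref{eq:Sob} and \eqref{eq:sigma2sigma1-ineq} (Corollary \ref{cor:interpolation_stability} with $\vartheta=3$). Working directly with the ADT deficit is legitimate in principle, but your Step 1 has a genuine gap. The claim that the pointwise bound $e_2(u)\le\tfrac34u^4-2u^2|\nabla u|^2-32|\nabla u|^4$ together with \eqref{eq:Sob} ``lets one run the compactness argument of [FP24]'' is not justified: discarding the gradient terms one only gets $\frac{F_2[u]}{F_1[u^{-2}]^2}\le\frac13\,\frac{(\int u^{-12})(\int u^4)}{(\int u^{-4})^2}$, and Cauchy--Schwarz gives $(\int u^{-4})^2\le(\int u^{-12})(\int u^4)$, i.e.\ the bound exceeds $\tfrac13$; so this input cannot even reprove \eqref{eq:ADT_intro_functional}, let alone exclude that $F_2[u]$ and $F_1[u^{-2}]$ both diverge while their ratio stays near $\tfrac13$. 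What makes the reduction to Sobolev stability work is a global inequality such as \eqref{eq:sigma2sigma1-ineq} (or \eqref{eq:reverse_sigma2}), which yields $F_2[u]/F_1[u^{-2}]^2\le F_2[1]F_1[1]/F_1[u^{-2}]^3$ and hence pins $F_1[u^{-2}]$ near $F_1[1]$; this deep input (the reverse-setting substitute for the Guan--Wang monotonicity used in [FP24]) is exactly how the paper's Proposition \ref{prop:glob2locrev} begins, and your ``subsidiary difficulty'' about a priori control of $F_2[u]$ is in fact the central missing ingredient — it also underlies your ``far regime,'' which presupposes the same global-to-local reduction. Moreover, even granting a small Yamabe deficit, $W^{1,2}$-closeness of $u^{-2}$ to the orbit does not licence your Taylor expansion: in three dimensions $W^{1,2}$ does not control $L^\infty$, while the negative powers $(1+\phi)^{-12}$ and the quartic gradient terms require uniform (indeed $W^{1,4}$) smallness of $\phi$. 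The paper supplies this upgrade through Lemma \ref{lem:uj_bounded} (the bound $\int|\nabla u_j|^4\le\tfrac3{128}\int u_j^4$ for near-optimizers, a consequence of the reverse sign structure of $e_2$) and the subsequent $L^\infty$-lemma; your outline has no substitute.

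In the local step the constants are not checked, and as written the argument does not close. Using only the one-sided bound $(\Delta\phi)_-\le\tfrac38(1+\phi)$ throws away the $3|\nabla v|^2/v$ part of $\sigma_1(v)>0$; if one then expands $e_2$ fully, the pure quartic gradient term carries the coefficient $-16$ (since $64(\sigma_1(u)+\tfrac12|\nabla u|^2+\tfrac1{32}u^2)=16u\Delta u-16|\nabla u|^2+8u^2$), so your $Q_4$ is \emph{not} nonnegative and the $\|\nabla\phi\|_{L^4}^4$-term on the right of \eqref{eq:quantstab} is lost; the $+32\|\nabla\phi\|_4^4$ survives only if the full pointwise positivity of $\sigma_1$ is used (as in the paper, which keeps $64\int\sigma_1(u_j)|\nabla r_j^{\hi}|^2\ge0$ wholesale). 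As for the ``$8$ against $\tfrac38$'' numerology: the dangerous cubic term enters with coefficient $16$, so the constraint costs $6\|\nabla\phi\|_2^2$, against a Hessian $8(\|\nabla\phi\|_2^2-3\|\phi\|_2^2)$ for the $F_2$-part; on degree-two spherical harmonics ($\|\nabla\phi\|_2^2=8\|\phi\|_2^2$) the combination $2\|\nabla\phi\|_2^2-24\|\phi\|_2^2=-8\|\phi\|_2^2$ is negative, which is precisely why the paper (Proposition \ref{prop:locrev}) introduces the low/medium/high frequency splitting with a large cutoff $L$, applies $\sigma_1(u_j)>0$ only to $|\nabla r_j^{\hi}|^2$, and disposes of the medium frequencies via the $C^k$-bounds $\|r_j^{\lo}+r_j^{\me}\|_{C^k}\lesssim\|r_j\|_{W^{1,2}}$. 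For your ADT quotient the situation is rescued only by the additional second-variation contribution of $F_1[u^{-2}]$ (which doubles the effective Hessian), a term your outline never isolates or computes; without either that computation or the paper's frequency decomposition, the claim that ``the numerology works out'' is unsubstantiated.
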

 Here $W^{1,p}=W^{1,p}(\mathbb S^3)$ denotes the Sobolev space with norm $\|\cdot\|_{W^{1,p}}\coloneqq (\|\nabla \cdot\|_p^p+\|\cdot\|_p^p)^{1/p}$.

Since \eqref{eq:quantstab} is a quantitative stability result for an inequality which, in the form of \eqref{eq:ADT_intro}, is itself a quantitative version of Schur's lemma, it makes sense {-- in the spirit of \cite{DeLellis2012} --} to refer to Theorem \ref{thm:rev} as an \emph{almost-almost-Schur lemma}.

\subsection{The reverse $\sigma_2$-curvature inequality} 
It should be carefully noted that \eqref{eq:ADT_intro_functional} is a \emph{reverse inequality} in the sense that the {total $\sigma_k$-curvature} with the largest $k$ is on the \emph{smaller} side of the inequality. {Indeed, for $d>2k$, the total $\sigma_k$-curvature bounds (up to a constant factor) the total $\sigma_l$-curvature from above for all $0\leq l<k$; see \cite{Guan2004}. (Note that the total $\sigma_0$-curvature is set to be the volume.)} The model representative of such inequalities is the \emph{reverse $\sigma_2$-{curvature} inequality}
\begin{equation}
\tag{$\sigma_2$}
    \label{eq:reverse_sigma2}
    F_2[u] \leq F_2[1] \qquad \text{ for all } u \in C^\infty(\mathbb S^3) \, \text{ with $u > 0$ and $\sigma_1(u) > 0$ }
\end{equation}
due to Guan, Viaclovsky, and Wang \cite{Guan2003b}. The equality cases coincide with the ones of \eqref{eq:ADT_intro_functional}. Their original proof in \cite{Guan2003b} assumed the additional constraint $\sigma_2^{g}> 0$. Although Ge and Wang \cite[Theorem 1]{Ge2013} showed that this condition can be removed, the price to pay is that the Yamabe invariant has to stay bounded after removal. For $d=3$, this remained a long-standing open problem, which has recently been solved; see the forthcoming preprint \cite{Ge2025}.

Analogously to Theorem \ref{thm:rev}, we have the following stability result.  

\begin{theorem}[Quantitative stability for \eqref{eq:reverse_sigma2}]\label{thm:revsigma}
    There is a constant $c_{\sigma_2}>0$ such that for all $u\in C^\infty(\mathbb S^3)$ with $u>0$ and $\sigma_1(u)>0$ we have
	\begin{equation}
		\label{eq:quantstabsigma}
		F_2[1] - F_2[u] \geq c_{\sigma_2}
		\inf_{\lambda,\Psi} \left( \|\lambda \, (u)_{\Psi}-1\|_{W^{1,2}(\mathbb S^{3})}^2 + \|\lambda \, (u)_{\Psi}-1\|_{W^{1,4}(\mathbb S^{3})}^4 \right),
	\end{equation}
	where the infimum is taken over all $\lambda\in \R$ and M\"obius transformations $\Psi:\mathbb S^3\to\mathbb S^3$.
\end{theorem}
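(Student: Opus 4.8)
\emph{Overall plan.} The plan is to combine a local spectral analysis near the maximizer $u\equiv 1$ with a global compactness argument. The maximizers of $F_2$ form the $5$-dimensional manifold $\mathcal M=\{\lambda\,(1)_\Psi:\lambda>0,\ \Psi\text{ M\"obius}\}$, and both sides of \eqref{eq:quantstabsigma} are unchanged under $u\mapsto\lambda\,(u)_\Psi$; so it suffices to prove the estimate for a conveniently normalized representative of each orbit, treating separately the \emph{local} regime where $u$ lies close to $\mathcal M$ and the \emph{global} regime where it is bounded away. In the local regime one would use the $5$ parameters of $\mathcal M$ to normalize $u=1+f$ with $f$ orthogonal in $L^2(\mathbb S^3)$ to the constants and to the first spherical harmonics $\mathcal H_1=\ker(-\Delta-3)$; near $\mathcal M$ this normalization is available by the implicit function theorem.

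\emph{Local estimate, quadratic part.} Writing $F_2[1+f]=V(1+f)^{1/3}E(1+f)$ with $V(u)=\int_{\mathbb S^3}u^{-12}\diff\omega$ and $E(u)=\int_{\mathbb S^3}e_2(u)\diff\omega$, one Taylor expands around $f=0$. Since $u\equiv1$ maximizes $F_2$, the first variation vanishes and the second variation $Q$ is $\le 0$; a direct computation (using $\int_{\mathbb S^3}f=0$) gives $-Q[f]=8\,V(1)^{1/3}\bigl(\|\nabla f\|_{L^2}^2-3\|f\|_{L^2}^2\bigr)$, whose kernel on $\{\int f=0\}$ is exactly $\mathcal H_1$ — matching the tangent space of $\mathcal M$. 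Decomposing $f=\sum_{\ell\ge 2}f_\ell$ into spherical harmonics (eigenvalues $\ell(\ell+2)\ge 8$ of $-\Delta$ on $\mathbb S^3$) yields the spectral gap $-Q[f]\gtrsim\|f\|_{W^{1,2}}^2$. This accounts for the $W^{1,2}$-term on the right-hand side of \eqref{eq:quantstabsigma}.

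\emph{Local estimate, quartic part.} Because $e_2(u)$ contains, beyond the quadratic gradient term, a genuinely quartic gradient contribution $\sim\int|\nabla u|^4$ and mixed terms like $\int u^2|\nabla^2 u|^2$, $\int u^2(\Delta u)^2$ and $\int u\,\Delta u\,|\nabla u|^2$, the expansion of $F_2[1+f]$ carries nontrivial terms of order $3$ and $4$ in $f$. To extract the $\|f\|_{W^{1,4}}^4$-term — which is not controlled by $-Q$, since $\|\nabla f\|_{L^2}$ being small does not force $\|\nabla f\|_{L^4}$ to be small — one would integrate by parts repeatedly and use the Bochner identity on $(\mathbb S^3,g_*)$ (where $\Ric=2g_*$) to rewrite the indefinite cubic terms $\int|\nabla f|^2\Delta f$ and $\int f\,\nabla f\cdot\nabla|\nabla f|^2$, and then show that, after absorbing a small multiple of $-Q[f]$, the remaining super-quadratic terms dominate a positive multiple of $\|\nabla f\|_{L^4}^4$ up to errors $o(1)\bigl(\|f\|_{W^{1,2}}^2+\|\nabla f\|_{L^4}^4\bigr)$. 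I expect this to be \textbf{the main obstacle}: the hypothesis $\sigma_1(u)>0$, i.e. $\tfrac14 u\Delta u-\tfrac34|\nabla u|^2+\tfrac3{32}u^2>0$, will have to be used in an essential way to obtain the one-sided control on $\Delta f$ (hence on $\nabla^2 f$) that tames the cubic terms and excludes the thin-spike configurations on which the naive expansion breaks down. Combined with $\|f\|_{L^4}^4\lesssim\|f\|_{W^{1,2}}^4\le\delta^2\|f\|_{W^{1,2}}^2$ for $\|f\|_{W^{1,2}}\le\delta$, this gives the local estimate $F_2[1]-F_2[1+f]\gtrsim\|f\|_{W^{1,2}}^2+\|f\|_{W^{1,4}}^4$ for normalized $f$ of small norm.

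\emph{Global regime.} Here one fixes, by scale invariance, the normalization $\int_{\mathbb S^3}u^{-12}=2\pi^2$, and it remains to show that $F_2[1]-F_2[u_n]\to 0$, together with $u_n>0$ and $\sigma_1(u_n)>0$, forces $u_n\to 1$ in $W^{1,2}\cap W^{1,4}$ after a M\"obius transformation. This is precisely where the \emph{reverse} nature of \eqref{eq:reverse_sigma2} intervenes, as flagged after \eqref{eq:sigma_1(u)}: one must prevent degeneration, namely vanishing or concentration of the density $u_n^{-12}$. Following the scheme of \cite{Frank2024b}, now for $d=3$, one would use the Yamabe inequality $F_1[u_n^{-2}]\ge F_1[1]$ for an a priori bound and the positivity $\sigma_1(u_n)>0$ to rule out degeneration; then $u_n$ subconverges to a maximizer, which by the equality case of \eqref{eq:reverse_sigma2} lies on $\mathcal M$, and the normalization forces the limit to be $1$. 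Contraposition then produces $\delta_0,c_0>0$ such that $\inf_{\lambda,\Psi}\bigl(\|\lambda(u)_\Psi-1\|_{W^{1,2}}^2+\|\lambda(u)_\Psi-1\|_{W^{1,4}}^4\bigr)\ge\delta_0$ implies $F_2[1]-F_2[u]\ge c_0$; since the same compactness also bounds that infimum from above, \eqref{eq:quantstabsigma} holds with a small constant in this regime, and patching it with the local estimate finishes the proof of Theorem~\ref{thm:revsigma}.
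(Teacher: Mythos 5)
Your overall architecture (Bianchi--Egnell: local spectral estimate plus global compactness, exploiting the invariance to normalize) is the same as the paper's, but at the two places where the actual work lies your proposal stops at a plan, and in each case the decisive mechanism is missing. In the global regime, ``use the Yamabe inequality for an a priori bound and $\sigma_1(u_n)>0$ to rule out degeneration, then $u_n$ subconverges to a maximizer'' does not yet yield convergence in $W^{1,2}\cap W^{1,4}$: the deficit $F_2[1]-F_2[u_n]\to 0$ controls no coercive norm directly, because of the reverse structure. The paper's route is to invoke the $\sigma_2$-$\sigma_1$-curvature inequality \eqref{eq:sigma2sigma1-ineq}, which together with \eqref{eq:Sob} forces $F_1[u_n^{-2}]\to F_1[1]$, so that $w_n=u_n^{-2}$ is an optimizing sequence for the Sobolev inequality and Lions's concentration compactness applies; the passage from $W^{1,2}$-convergence of $w_n$ to uniform and $W^{1,4}$-convergence of $u_n$ then rests on the bound $\int|\nabla u_n|^4\lesssim\int u_n^4$ of Lemma~\ref{lem:uj_bounded} (a consequence of $F_2[u_n]>0$ and the nonpositive sign of the gradient block in \eqref{eq:e2(u)_definition}) and the H\"older-to-uniform Lemma~\ref{lem:hölder-infty}, after which the expansion \eqref{eq:glob2locproof} forces $\|\nabla \tilde u_n\|_4\to 0$. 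None of this is replaced by anything in your sketch, and without it the claimed compactness in $W^{1,4}$ is unsubstantiated.

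In the local regime you explicitly leave the super-quadratic terms open (``I expect this to be the main obstacle'') and propose Bochner-type manipulations of terms such as $\int u^2|\nabla^2u|^2$ and $\int u^2(\Delta u)^2$ --- but such terms do not occur: $e_2(u)$ in \eqref{eq:e2(u)_definition} is only \emph{linear} in second derivatives (through $\Delta(u^2)$ inside $\sigma_1(u)$), and in the reverse setting the quartic gradient term enters the deficit with the \emph{favorable} sign $+32\|\nabla r\|_4^4$, so no integration-by-parts gymnastics is needed to produce the $W^{1,4}$-term. The genuine difficulty is the indefinite term $64\int(\sigma_1(u)-\sigma_1(1))|\nabla r|^2$, which the paper handles by splitting $r$ into low/medium/high frequencies, using $C^k$-control of the low and medium parts, discarding $\sigma_1(u)|\nabla r^{\hi}|^2\geq 0$ pointwise (this is where $\sigma_1(u)>0$ enters, not through a bound on $\Delta r$), and absorbing $-64\sigma_1(1)\|\nabla r^{\hi}\|_2^2$ into the spectral gap by choosing the cutoff $L$ large. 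Finally, your implicit-function-theorem normalization would have to be justified in a topology strong enough to legitimize the Taylor expansion of $u^{-12}$ (uniform closeness to $1$), and the paper's alternative --- choosing the $W^{1,2}$-minimizing M\"obius transformation --- requires the nontrivial comparability statement of Proposition~\ref{prop:distcomp} together with the blow-up criterion of Lemma~\ref{lem:blowup}, precisely because the conformal $W^{1,4}$-bound used in the higher-dimensional case fails for the negative-exponent action $(u)_\Psi$. As it stands, the proposal identifies the right skeleton but proves neither the global-to-local reduction nor the local bound.
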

We first make a few comments on this statement.
 \begin{remarks}
\label{remarks_sigma2}
\begin{enumerate}[(i)]
\item \label{remark_monotonicity} We notice that \eqref{eq:reverse_sigma2} and the Sobolev inequality \eqref{eq:Sob} imply \eqref{eq:ADT_intro_functional}. As a consequence, also the stability result from Theorem \ref{thm:revsigma} for \eqref{eq:reverse_sigma2} implies Theorem \ref{thm:rev} via \eqref{eq:Sob}, with a constant $c_{ADT} = F_1[1]^{-2} c_{\sigma_2}$. However, instead of proving stability for \eqref{eq:reverse_sigma2} first, we follow a different approach and prove Theorem \ref{thm:revsigma} using Theorem \ref{thm:rev} as explained in the next subsection; see also Remark \ref{rem:sigma2ADT}. 

\item For dimension $d \geq 5$, the stability of the (non-reverse) $\sigma_2$-curvature inequality $\mathcal F_2[g] \geq \mathcal F_2[1]$ has been studied in the recent preprint \cite{Frank2024b}. Similarly to the results in \cite{Frank2024b}, this refinement of the reverse $\sigma_2$-curvature inequality is invariant under M\"obius transformations, and its corresponding Euler--Lagrange equation is fully non-linear. While our proof follows the overall scheme from \cite{Frank2024b}, heavy modifications and additional care are needed throughout to deal with the negativity of the exponent in the conformal factor $u^{-8}$ and of the prefactor of the first summand of $e_2(u)$ in \eqref{eq:e2(u)_definition}.

\item The exponent $2$ {of} the $W^{1,2}$-norm on the right side of \eqref{eq:quantstabsigma} is sharp. This can be proved like in \cite[Section 5]{Frank2024b}, and we omit a detailed proof. (Note that the parameter $\xi_\varepsilon$ of the M\"obius transformation $\Psi=\Psi_\varepsilon$ in \cite[Section 5]{Frank2024b} that minimizes $\|(1+\varepsilon\phi)_{\Psi}-1\|_{W^{1,2}}$ tends to $0$. In particular, we do not face any problems due to blow-up of $(1)_{\Psi_\varepsilon}$ as in Proposition \ref{prop:distcomp}.) On the other hand, we stress that it remains an open problem to determine whether the exponent $4$ of the $W^{1,4}$-norm in \eqref{eq:quantstabsigma} is optimal. While an affirmative answer can be expected by analogy with \cite{Frank2024b}, it appears that the family of functions used in \cite{Frank2024b} does not yield the conclusion for \eqref{eq:quantstabsigma} due to complications arising from the negative exponents.
\end{enumerate}
\end{remarks}

\subsection{An interpolation family of reverse inequalities and their stability}

Extending and systemizing Remark \ref{remarks_sigma2}.(\ref{remark_monotonicity}), we now explain how an entire family of reverse inequalities, as well as their quantitative stability, can be obtained by interpolation with the Sobolev inequality \eqref{eq:Sob}. The strongest inequality of this family, and hence one endpoint of the interpolation, is given by the inequality 
\begin{equation}
\tag{$\sigma_2$-$\sigma_1$}
    \label{eq:sigma2sigma1-ineq}
    F_2[u] F_1[u^{-2}] \leq F_2[1] F_1[1] \qquad \text{ for all $u > 0$ with $\sigma_1(u) > 0$\,.}
\end{equation} 
Equality holds in the same cases as in \eqref{eq:ADT_intro_functional} and \eqref{eq:reverse_sigma2}. The validity of this inequality was left as an open question in \cite{Ge2013} and has {recently been} proved in the forthcoming preprint \cite{Ge2025}. We refer to \eqref{eq:sigma2sigma1-ineq} as the \emph{$\sigma_2$-$\sigma_1$-curvature inequality}.

Applying \eqref{eq:sigma2sigma1-ineq} together with \eqref{eq:Sob}, we obtain the family 
\begin{equation}
\label{eq:interpol}
    F_2[u] F_1[u^{-2}]^{1 - \vartheta} \leq F_2[1] F_1[1]^{1 - \vartheta} \qquad \text{ for all $u > 0$ with $\sigma_1(u) > 0$}\,,
\end{equation}
with interpolation parameter $\vartheta \in [0, \infty)$. Moreover, for all $\vartheta$, equality holds if {and only if} $u = \lambda (1)_\Psi$ for $\lambda>0$ and $\Psi$ Möbius transformation. By \eqref{eq:Sob}, a smaller value of $\vartheta$ corresponds to a stronger inequality. Notice also that the three special reverse inequalities discussed so far, namely \eqref{eq:ADT_intro_functional}, \eqref{eq:reverse_sigma2}, and \eqref{eq:sigma2sigma1-ineq}, are all embedded into this family as the cases $\vartheta = 3$, $\vartheta = 1$, and $\vartheta = 0$, respectively. (The Sobolev inequality \eqref{eq:Sob} corresponds to $\vartheta \to \infty$.) 

From our stability result for \eqref{eq:ADT_intro_functional}, Theorem \ref{thm:rev}, we can deduce an analogous stability result for all inequalities from the family \eqref{eq:interpol}. 

\begin{corollary}[Interpolated stability]
    \label{cor:interpolation_stability}
    For all $\vartheta > 0$ and all $u \in C^\infty(\mathbb S^3)$ with $u > 0$ and $\sigma_1(u) > 0$, we have 
    \begin{equation*}
   F_2[1] F_1[1]^{1 - \vartheta} - F_2[u] F_1[u^{-2}]^{1 - \vartheta} \geq c(\vartheta)  \inf_{\lambda,\Psi} \left( \|\lambda \, (u)_{\Psi}-1\|_{W^{1,2}(\mathbb S^{3})}^2 + \|\lambda \, (u)_{\Psi}-1\|_{W^{1,4}(\mathbb S^{3})}^4 \right),
    \end{equation*}
    where  $c(\vartheta) \coloneqq  \min \{c_{ADT}F_1[1]^{3-\vartheta}, \vartheta c_{ADT}F_1[1]^{3-\vartheta/3}/3, F_2[1]F_1[1]^{1-\vartheta}/(2 |\mathbb S^3|) \}$, with $c_{ADT}$ being the constant from Theorem~\ref{thm:rev}.
\end{corollary}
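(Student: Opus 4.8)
The plan is to peel off the $W^{1,2}$- and $W^{1,4}$-deficit purely algebraically, treating $a := F_2[u]$ and $b := F_1[u^{-2}]$ as real numbers constrained by the available inequalities, with reference values $a_0 := F_2[1] > 0$ and $b_0 := F_1[1] > 0$. Write $\mathcal{D} := \inf_{\lambda,\Psi}\bigl(\|\lambda (u)_\Psi - 1\|_{W^{1,2}(\mathbb{S}^3)}^2 + \|\lambda (u)_\Psi - 1\|_{W^{1,4}(\mathbb{S}^3)}^4\bigr)$ for the quantity appearing on the right of the claimed bound, and set $A := a_0 b_0^{1-\vartheta} - a b^{1-\vartheta}$, which is to be bounded below by $c(\vartheta)\,\mathcal{D}$. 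The inputs are: (i) $a_0 - a \geq c_{\sigma_2}\,\mathcal{D}$, which is exactly Theorem~\ref{thm:revsigma}; (ii) $b \geq b_0$, which is the Sobolev inequality \eqref{eq:Sob} applied to $w = u^{-2}$; (iii) $ab \leq a_0 b_0$, the $\sigma_2$-$\sigma_1$-inequality \eqref{eq:sigma2sigma1-ineq}; and (iv) the trivial a priori bound $\mathcal{D} \leq 2|\mathbb{S}^3|$, obtained by choosing $\lambda = 0$, so that $\lambda (u)_\Psi - 1 = -1$ and each of the two norms contributes $|\mathbb{S}^3|$.

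I would then distinguish three cases. If $a \leq 0$, then $a b^{1-\vartheta} \leq 0$, hence $A \geq a_0 b_0^{1-\vartheta}$, and (iv) turns this into $A \geq \tfrac{a_0}{2|\mathbb{S}^3|}\, b_0^{1-\vartheta}\,\mathcal{D}$. If $a > 0$ and $\vartheta \geq 1$, then $1-\vartheta \leq 0$ together with (ii) gives $b^{1-\vartheta} \leq b_0^{1-\vartheta}$, so $A \geq b_0^{1-\vartheta}(a_0 - a) \geq c_{\sigma_2}\,b_0^{1-\vartheta}\,\mathcal{D}$ by (i). If $a > 0$ and $0 < \vartheta < 1$, I would use the weighted arithmetic--geometric mean inequality $(b/b_0)^{1-\vartheta} \leq (1-\vartheta)(b/b_0) + \vartheta$ to get $b^{1-\vartheta} \leq (1-\vartheta) b_0^{-\vartheta} b + \vartheta b_0^{1-\vartheta}$; multiplying by $a > 0$ and invoking (iii) yields $a b^{1-\vartheta} \leq (1-\vartheta) a_0 b_0^{1-\vartheta} + \vartheta a b_0^{1-\vartheta}$, and therefore $A \geq \vartheta b_0^{1-\vartheta}(a_0 - a) \geq \vartheta c_{\sigma_2}\,b_0^{1-\vartheta}\,\mathcal{D}$ by (i). Since $c(\vartheta) = b_0^{1-\vartheta}\min\{c_{\sigma_2}, \vartheta c_{\sigma_2}, a_0/(2|\mathbb{S}^3|)\}$ is no larger than each of the three constants $\tfrac{a_0}{2|\mathbb{S}^3|}b_0^{1-\vartheta}$, $c_{\sigma_2}b_0^{1-\vartheta}$, $\vartheta c_{\sigma_2}b_0^{1-\vartheta}$ produced above, every case gives $A \geq c(\vartheta)\,\mathcal{D}$, completing the argument.

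The one step that requires care is the regime $F_2[u] > 0$ with $\vartheta \in (0,1)$: there the exponent $1-\vartheta$ on $F_1[u^{-2}]$ is positive while $F_1[u^{-2}]$ admits no upper bound --- it is only bounded below, by \eqref{eq:Sob} --- so the term $a b^{1-\vartheta}$ cannot be controlled crudely and the monotonicity of $t \mapsto t^{1-\vartheta}$ works against us. The weighted AM--GM linearisation is exactly what converts the multiplicative bound $ab \le a_0 b_0$ into the additive estimate needed to absorb this term, and the loss incurred is the factor $\vartheta$, which is the source of the constant $\vartheta c_{\sigma_2}$ in $c(\vartheta)$. The remaining cases are soft: for $F_2[u] \le 0$ the deficit $\mathcal{D}$ is automatically bounded by the fixed constant $2|\mathbb{S}^3|$ while the main term $a_0 b_0^{1-\vartheta}$ stays positive, and for $\vartheta \ge 1$ the exponent $1-\vartheta$ is nonpositive, so that \eqref{eq:Sob} can be used directly without any interpolation device.
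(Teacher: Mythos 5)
Your proposal is correct and takes essentially the same route as the paper: the same three-case split ($F_2[u]\le 0$; $\vartheta\ge 1$; $\vartheta\in(0,1)$), the same ingredients (Theorem \ref{thm:revsigma}, the Sobolev inequality \eqref{eq:Sob}, the $\sigma_2$-$\sigma_1$-inequality \eqref{eq:sigma2sigma1-ineq}, and the trivial bound $\dist(u)\le 2|\mathbb S^3|$ via $\lambda=0$), and the same resulting constants. The only cosmetic difference is in the case $\vartheta\in(0,1)$, where you linearize $t\mapsto t^{1-\vartheta}$ at $F_1[1]$ before using \eqref{eq:sigma2sigma1-ineq} and Theorem \ref{thm:revsigma}, whereas the paper factors $F_2[u]F_1[u^{-2}]^{1-\vartheta}=\bigl(F_2[u]F_1[u^{-2}]\bigr)^{1-\vartheta}F_2[u]^{\vartheta}$ and linearizes $t\mapsto t^{\vartheta}$; both are the same tangent-line argument and give the identical constant $c_{\sigma_2}\vartheta\,F_1[1]^{1-\vartheta}$.
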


It is remarkable that this produces stability results not only for inequalities between \eqref{eq:ADT_intro_functional} and \eqref{eq:reverse_sigma2} but also for all other inequalities up until, but excluding the endpoint cases $\vartheta = 0$ and $\vartheta\to\infty$. We emphasize that our proof of the almost-almost-Schur lemma, Theorem \ref{thm:rev}, is independent of  \eqref{eq:sigma2sigma1-ineq}, which only enters the proof of Corollary \ref{cor:interpolation_stability} (and hence Theorem \ref{thm:revsigma}).

In higher dimensions, interpolated nonlinear Yamabe-type inequalities similar to \eqref{eq:interpol} can be derived along with stability results as in Corollary \ref{cor:interpolation_stability} (excluding the endpoint cases). Nevertheless, the local bound in Section \ref{sec:local} exploits the uniform convergence of minimizing sequences, which is characteristic of $d=3$, in an essential way. This prevents us from extending our stability analysis, which avoids the subtle frequency decomposition carried out in \cite{Frank2024b}, to higher dimensions.

\subsection{Some context and related works} In this subsection we supplement our results with underlying theory and background material on conformal geometry (with a focus on the three-dimensional case) as well as stability of functional inequalities (with a focus on non-quadratic and reverse Sobolev-type inequalities). We close with a short discussion on a more general framework in the context of the almost-Schur lemma.

For a more {comprehensive}
background discussion of stability inequalities, we refer to the recent lecture notes \cite{Frank2024}. A detailed discussion related to {the stability of the $\sigma_2$-curvature inequality in higher dimensions} can be found in \cite{Frank2024b}.

\subsubsection*{Conformal geometry in lower dimensions}
Let $(M,g)$ be a $d$-dimensional Riemannian manifold with $d\geq 3$. The $\sigma_k^g$-curvature, $1\leq k\leq d$, is defined as the $k$-th elementary symmetric polynomial of the eigenvalues of the Schouten tensor with respect to the metric $g$. 
 In analogy to the well-known Yamabe problem and its solution (see \cite{Lee1987}, for instance), 
 the \textit{$\sigma_k$-Yamabe problem} consists in finding a metric $g$ conformally equivalent to a given metric $g_0$ such that the constant $\sigma_k$-curvature equation holds. {To guarantee ellipticity, it is common to assume $\sigma_l^g>0$ for all $l\leq k$; see \cite{Viaclovsky2000}, for instance.}  
While the constant $\sigma_k$-curvature equation is a semilinear equation in the conformal factor for $k=1$, it becomes fully-nonlinear for $k\geq 2$. 

If the dimension is small, then less information on the $\sigma_k$-curvatures is needed to fully characterize a manifold. Hence, a stronger form of rigidity is to be expected. As it turns out, the value $d=2k$ is critical, and for $d\leq 2k$ manifolds $(M,g)$ can be almost fully characterized by assuming $\sigma_l^g>0$ for all $l\leq k$.

Indeed, Guan, Viaclovsky, and Wang \cite{Guan2003b} showed that such $g$ have positive Ricci curvature. As a corollary, in the compact, locally conformally flat case, the manifold $(M,g)$ is conformally equivalent to a spherical space form; see \cite[Proposition 5]{Guan2003b} and \cite[Theorem 1 (B)]{Guan2004}. In case $k=2$ and $d=3$, an even stronger characterization holds: For $(M,g)$ merely compact and with nonnegative total $\sigma_2$-curvature, the critical points of $\mathcal F_2$ are given by metrics of constant sectional curvature; see \cite{Gursky2001}. If the manifold is compact, not conformally equivalent to a spherical space form, and $d<2k$, then Gursky and Viaclovsky \cite{Gursky2007} proved existence and regularity of solutions to the inhomogeneous $\sigma_k$-Yamabe problem and compactness of the set of solutions.  Hence, the reduction from Ricci curvature to scalar curvature bounds in dimension $3$ and $4$ by Ge and Wang in \cite{Ge2012,Ge2013b} seems to be inherently related to the nature of the problem; for more on the study of $4$-manifolds, we refer to \cite{Chang2002, Chang2002a}.

\subsubsection*{Stability of the Sobolev and reverse Sobolev-type inequalities} The question of (quantitative) stability was first raised in \cite{BREZIS198573} for the Sobolev inequality on $\R^d$. Bianchi and Egnell \cite{Bianchi1991} gave an affirmative answer to this problem -- along with a robust two-step method -- by bounding the deficit functional from below by the \textit{square} of the $\dot W^{1,2}(\R^d)$-distance to the set of optimizers.

The next natural question is to extend this result to the $p$-Sobolev inequality with $p\neq2$. After preliminary works in
\cite{Cianchi2009,Figalli2019,Neumayer2019}, Figalli and Zhang \cite{Figalli2022}  proved stability with a distance in terms of the gradient $L^p$-norm and with an optimal power $\max\{2,p\}$; see also \cite{Liu2025} for a stability result for critical points in the absence of bubbling. The non-quadratic stability exponents in \cite{Frank2024b, guerra2023sharp, Wang2025, Frank2025} are of a similar origin: They lack an inner product induced by the larger side of the inequality. 
Theorem \ref{thm:rev}, Theorem \ref{thm:revsigma}, and Corollary~\ref{cor:interpolation_stability} extend this notion to the setting of reverse Sobolev-type inequalities, which we discuss next. 

In the setting of the fractional Sobolev inequality on $\dot W^{s,2}(\R^d)$, $s<d/2$, for instance, quadratic stability was obtained in \cite{Chen2013}. In dimensions lower than $2s$, it was shown in  \cite{Hang2007, Frank2022} that the sign of the Sobolev inequality changes, leading to a notion of \textit{reverse} Sobolev inequality. This extends the previously known range of parameters for the fractional Sobolev inequality to include $s-d/2\in (0,1)\cup (1,2)$. The phenomenon of sign reversion for lower dimensions resembles the one found by Guan and Wang in \cite{Guan2004} for the $\sigma_k$-$\sigma_l$-curvature inequalities. Sharp stability for the reverse Sobolev inequality was proved by the first author in \cite{Koenig2025} for the full parameter regime $s-d/2\in (0,1)\cup(1,2)$. Gong, Yang and Zhang \cite{Gong2025} also obtained stability results for $s-d/2\in (1,2)$ based on a novel correspondence between the reverse Sobolev and the reverse HLS-inequality established in \cite{Dou2015}; see also \cite{Ngo2017, Carrillo19}.

\subsubsection*{Towards a more general stability result for the almost-Schur lemma} Our result covers a very special case of the Schur lemma. Indeed, while the Schur lemma is formulated for general Riemannian manifolds, the almost-Schur lemma is stated for manifolds with nonnegative Ricci curvature \cite{DeLellis2012}, and in the special case $d=3,4$ with nonnegative scalar curvature \cite{Ge2012,Ge2013b}. In turn, we confined ourselves to the $3$-sphere. We think it is an interesting problem to extend our stability result {for the almost-Schur lemma} to $\mathbb S^d$ (or more general manifolds). Further note that we assume our scalar curvature to be positive instead of nonnegative \cite{Ge2013b}, which seems reminiscent of the geometric background of the inequality rather than a technical obstruction of our method.

\subsection*{Acknowledgements} We would like to thank Alice Chang for raising the question of stability for the reverse $\sigma_2$-curvature inequality, and Rupert Frank for telling us about this problem. We are grateful to Guofang Wang for pointing out reference \cite{ChowLuNi2006} and informing us about the forthcoming work \cite{Ge2025}. 
Part of this work has been done while J.W.P.~was visiting Goethe University in Frankfurt, and he would like to thank T.K.~and the institute for their hospitality. T.K.~is funded by the Deutsche Forschungsgemeinschaft (DFG, German Research Foundation) – project number 555837013. Partial support through the DFG grants FR 2664/3-1 and TRR 352-Project-ID 470903074 and through the Studienstiftung des
deutschen Volkes (J.W.P.) is acknowledged.

\section{Proof strategy}

To prove stability of   \eqref{eq:ADT_intro_functional} as stated in Theorem \ref{thm:rev}, we apply the two-step method as promoted by Bianchi and Egnell in \cite{Bianchi1991}. This allows us to conclude the stability of the other reverse inequalities, Corollary \ref{cor:interpolation_stability}, in particular the stability of  \eqref{eq:reverse_sigma2} in Theorem \ref{thm:revsigma}, via interpolation.

Here and in the following, for $u \in C^\infty(\mathbb S^3)$ with $u > 0$ and \emph{for any $p \in \R \setminus \{0\}$}, we abbreviate $$\|u\|_p \coloneqq \left(\int_{\mathbb S^3} u^p \,\mathrm d\omega\right)^\frac{1}{p}\,.$$
Moreover, {the infimum} $\inf_\Psi$ (respectively, $\inf_{\lambda, \Psi})$ is always understood to be taken over all Möbius transformations $\Psi: \mathbb S^3 \to \mathbb S^3$ (and $\lambda \in \R$), unless stated otherwise.

\subsection{The Bianchi--Egnell strategy} Working in the framework of \cite{Bianchi1991}, we have to prove two propositions: A global-to-local reduction and a local bound.

\begin{proposition}[Global-to-local reduction]\label{prop:glob2locrev}
	Let $(u_j)\subset C^\infty(\Sph^3)$ be a sequence of positive functions with $\sigma_1(u_j)>0$ for all $j$ and satisfying, as $j\to\infty$, 
	\begin{equation*}
	    \frac{F_{2}[u_j]}{F_1[u^{-2}_j]^2}\to  \frac{F_{2}[1]}{F_1[1]^2}
	\qquad\text{and}\qquad
	\| u_j \|_{-12} \to \| 1 \|_{-12} \,.
	\end{equation*}
	Then
	$$
	\inf_{\Psi} \| (u_j)_{\Psi} - 1 \|_{W^{1,4}(\Sph^3)} \to 0
	\qquad\text{as}\ j\to\infty \,.
	$$
\end{proposition}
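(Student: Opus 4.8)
The plan is to argue by compactness and reduce everything to the known rigidity statement for the reverse $\sigma_2$-curvature inequality \eqref{eq:reverse_sigma2}, whose equality cases are exactly the functions $u = \lambda (1)_\Psi$. Suppose, for contradiction, that along some sequence $(u_j)$ with $F_2[u_j] \to F_2[1]$ and $\|u_j\|_{-12} \to \|1\|_{-12}$ one has $\inf_\Psi \|(u_j)_\Psi - 1\|_{W^{1,4}} \geq \delta > 0$. The first step is to find a good Möbius normalization: since all the functionals in play are Möbius invariant, I may replace $u_j$ by $(u_j)_{\Psi_j}$ for cleverly chosen $\Psi_j$ (and rescale by $\lambda_j$) so that the normalized sequence $v_j \coloneqq \lambda_j (u_j)_{\Psi_j}$ is "concentration-compact" — e.g. by using the invariance to fix the barycenter of the measure $v_j^{-12}\,\mathrm d\omega$ at the origin, as in the standard treatment of the Yamabe/Sobolev problem after stereographic projection, cf.\ \cite{Frank2024b}. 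The constraint $\|u_j\|_{-12}\to\|1\|_{-12}$ together with $F_2[u_j]\to F_2[1]$ means $\int_{\mathbb S^3} e_2(v_j)\,\mathrm d\omega \to \int_{\mathbb S^3} e_2(1)\,\mathrm d\omega$, i.e.\ $v_j$ is a minimizing sequence for $-\int e_2$ under the volume constraint.

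The second step is to extract compactness. Because \eqref{eq:e2(u)_definition} contains the term $-64\big(\tfrac12|\nabla v_j|^2 + \cdots\big)|\nabla v_j|^2$, an upper bound on $\int e_2(v_j)$ forces control on $\int |\nabla v_j|^4$ and, via $\sigma_1(v_j) > 0$ (which by \eqref{eq:sigma_1(u)} says $\tfrac18\Delta(v_j^2) > |\nabla v_j|^2 - \tfrac{3}{32}v_j^2$), uniform $W^{1,2}$ and indeed $W^{1,4}$ bounds on $v_j$; here the positivity constraint $\sigma_1(v_j)>0$ and the sign structure of $e_2$ are exactly what keep the problem coercive (this is the point flagged in the paper as having "severe implications"). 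After the normalization of Step~1 I claim no mass escapes to a point, so $v_j \rightharpoonup v_\infty$ weakly in $W^{1,4}$ with $v_\infty \geq 0$, and $v_\infty \not\equiv 0$; one then needs to upgrade weak to strong convergence, which should follow from the fact that any loss of compactness would strictly lower the value of $-\int e_2$ relative to the constrained minimum (a profile-decomposition / Brezis–Lieb argument for the functional $\int e_2$), contradicting that $v_j$ is minimizing. Thus $v_j \to v_\infty$ in $W^{1,4}(\mathbb S^3)$, $\|v_\infty\|_{-12} = \|1\|_{-12}$, and $F_2[v_\infty] = F_2[1]$.

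The third step is to invoke rigidity: since $v_\infty$ attains equality in \eqref{eq:reverse_sigma2}, the characterization of equality cases gives $v_\infty = \lambda (1)_\Psi$ for some $\lambda > 0$ and Möbius $\Psi$. But then, using that $v_j = \lambda_j (u_j)_{\Psi_j}$ and absorbing $\Psi$ and $\lambda$ into the normalization, we get $\inf_\Psi \|(u_j)_\Psi - 1\|_{W^{1,4}} \leq \|c_j(u_j)_{\tilde\Psi_j} - 1\|_{W^{1,4}} \to 0$ for suitable $c_j, \tilde\Psi_j$ — contradicting the standing assumption $\inf_\Psi \|(u_j)_\Psi - 1\|_{W^{1,4}} \geq \delta$. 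This completes the contradiction and proves the proposition.

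The main obstacle I anticipate is Step~2, the compactness: unlike the classical Bianchi–Egnell setting for the Sobolev inequality, here the "large side" $F_2$ is not a Hilbert-space norm, the Euler–Lagrange equation is fully nonlinear, and — crucially — the conformal factor carries a \emph{negative} exponent ($u^{-8}$, $u^{-12}$), so standard concentration-compactness must be adapted to rule out the degeneration $v_j \to 0$ on large sets or blow-up of $(1)_{\Psi_j}$; getting the normalization of Step~1 to be simultaneously compatible with the $W^{1,4}$-bound coming from $\int e_2(v_j)$ and with the volume constraint $\|v_j\|_{-12} = \|1\|_{-12}$ is the delicate point, and is presumably where the "heavy modifications" relative to \cite{Frank2024b} are concentrated.
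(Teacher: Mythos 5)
There is a genuine gap: your Step~2 is the entire content of the proposition and it is not actually carried out, and the mechanism you sketch for it is doubtful. First, the sign is backwards: since the gradient terms enter $e_2$ in \eqref{eq:e2(u)_definition} with a \emph{negative} sign, it is not an upper bound on $\int e_2(v_j)$ but the fact that the limit $F_2[1]$ is \emph{positive} that yields gradient control, and even then only the relative bound $\int|\nabla v_j|^4\lesssim\int v_j^4$ (this is exactly Lemma~\ref{lem:uj_bounded} in the paper), which is not a uniform $W^{1,4}$ bound because the constraint $\|v_j\|_{-12}=\|1\|_{-12}$ involves a negative exponent and gives no upper bound whatsoever on $\|v_j\|_4$. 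Likewise $\sigma_1(v_j)>0$ only says the scalar curvature is positive; it does not produce a $W^{1,2}$ bound independent of $\|v_j\|_4$. Second, the key assertions ``no mass escapes after fixing the barycenter of $v_j^{-12}\,\mathrm d\omega$'' and ``a profile-decomposition / Brezis--Lieb argument for $\int e_2$'' are stated, not proved; for this fully nonlinear, sign-indefinite functional with negative-exponent constraint there is no off-the-shelf such result (note also that $\int v_j^{-12}\,\mathrm d\omega$ is not continuous under weak $W^{1,4}$ convergence, and a weak limit $v_\infty\geq 0$, $v_\infty\not\equiv 0$ need not be admissible for the rigidity statement of \eqref{eq:reverse_sigma2}, which is formulated for smooth positive functions with $\sigma_1>0$). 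So as written the contradiction argument never gets to Step~3.

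The paper avoids building concentration compactness for $F_2$ altogether. It uses the $\sigma_2$-$\sigma_1$ inequality \eqref{eq:sigma2sigma1-ineq} together with the Sobolev inequality \eqref{eq:Sob} to get the chain $\tfrac{F_2[u_j]}{F_2[1]}\tfrac{F_1[u_j^{-2}]}{F_1[1]}\leq 1\leq \tfrac{F_1[u_j^{-2}]}{F_1[1]}$, so $F_2[u_j]\to F_2[1]$ forces $F_1[w_j]\to F_1[1]$ for $w_j\coloneqq u_j^{-2}$. Then $w_j$ is an ordinary minimizing sequence for the Sobolev/Yamabe functional, where Lions's concentration compactness and the classification of optimizers are classical, yielding M\"obius maps with $[w_j]_{\Psi_j}\to 1$ in $W^{1,2}$; this gives pointwise a.e.\ convergence of $\tilde u_j=(u_j)_{\Psi_j}$ to $1$, which combined with the relative bound of Lemma~\ref{lem:uj_bounded} and the H\"older-to-$L^\infty$ upgrade of Lemma~\ref{lem:hölder-infty} gives uniform convergence, and finally returning to the expansion of $F_2[\tilde u_j]$ shows $\int f_j\to 0$ and hence $\nabla\tilde u_j\to 0$ in $L^4$. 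If you want to rescue your direct approach, you would have to supply precisely this missing compactness input; the reduction to the Sobolev inequality is what makes the proposition tractable.
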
	
Due to the negative exponent in the conformal factor for $d=3$, we develop a regularization trick for optimizing sequences of \eqref{eq:ADT_intro_functional} and a blow-up criterion in Lemma \ref{lem:hölder-infty} and \ref{lem:blowup}, respectively, which turn out to hold (and are stated) for general dimensions.

\begin{proposition}[Local bound]\label{prop:locrev}
	There is a constant $c>0$ with the following property: Let $(u_j)\subset C^\infty(\Sph^3)$ be a sequence of positive functions with $\sigma_1(u_j)>0$ for all $j$, with $\| u_j \|_{-12} = \| 1\|_{-12}$ for all $j$ and with $\inf_{\Psi} \| (u_j)_{\Psi} - 1 \|_{W^{1,4}(\Sph^3)} \to 0$ as $j\to\infty$. Then
	$$
	\liminf_{j\to\infty} \frac{F_2[1]F_{1}[1]^{-2}-F_{2}[u_j] F_{1}[u_j^{-2}]^{-2}}{\inf_{\Psi} \left( \| (u_j)_{\Psi} - 1 \|_{W^{1,2}(\Sph^3)}^2 + \| (u_j)_{\Psi} - 1 \|_{W^{1,4}(\Sph^3)}^4 \right)} \geq c \,.
	$$
\end{proposition}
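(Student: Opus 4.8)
The plan is to run the Bianchi--Egnell scheme \cite{Bianchi1991} in the version used in \cite{Frank2024b}, keeping careful track of the sign changes caused by the negative exponents. \emph{First}, I would exploit that both the numerator $F_2[1]-F_2[u_j]$ and the infimum in the denominator are Möbius-invariant, together with $\|(u_j)_\Psi\|_{-12}=\|u_j\|_{-12}$ for every $\Psi$, to replace $u_j$ by $(u_j)_{\Psi_j}$: pick $\Psi_j$ almost optimal for $\inf_\Psi(\|(u_j)_\Psi-1\|_{W^{1,2}}^2+\|(u_j)_\Psi-1\|_{W^{1,4}}^4)$ and then perturb it slightly, via the implicit function theorem applied to the conformal action transverse to the fixed point $1$, so that $v_j:=(u_j)_{\Psi_j}-1$ is $L^2$-orthogonal to the degree-one spherical harmonics. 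This keeps $v_j\to 0$ in $W^{1,4}(\mathbb S^3)$ and the comparability $\|v_j\|_{W^{1,2}}^2+\|v_j\|_{W^{1,4}}^4\lesssim\inf_\Psi(\cdots)$ (here Proposition~\ref{prop:distcomp} is used), and it preserves the normalization $\|1+v_j\|_{-12}=\|1\|_{-12}$, whose expansion forces $\int_{\mathbb S^3}v_j\diff\omega=O(\|v_j\|_2^2)$. Since $3<4$, Morrey's embedding gives $\|v_j\|_\infty\to 0$. It then suffices to prove $F_2[1]-F_2[1+v_j]\ge c(\|v_j\|_{W^{1,2}}^2+\|v_j\|_{W^{1,4}}^4)$.

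\emph{Second}, I would Taylor-expand the deficit. Under the normalization the volume prefactor of $F_2$ is the constant $|\mathbb S^3|^{1/3}$, so $F_2[1+v]=|\mathbb S^3|^{1/3}\int_{\mathbb S^3}e_2(1+v)\diff\omega$. Writing \eqref{eq:e2(u)_definition} as $e_2(u)=-16\,u\Delta u\,|\nabla u|^2+16|\nabla u|^4-8u^2|\nabla u|^2+\tfrac34u^4$ and using the constraint (a convergent series in $v$) to replace $\int v$ by $\tfrac{13}{2}\int v^2$ up to higher-order-in-$v$ terms, one obtains
\begin{equation*}
F_2[1]-F_2[1+v]=|\mathbb S^3|^{1/3}\Bigl(8\bigl(\|\nabla v\|_2^2-3\|v\|_2^2\bigr)+16\!\int_{\mathbb S^3}\!(1+v)\,\Delta v\,|\nabla v|^2\diff\omega-16\|\nabla v\|_4^4+\mathcal R[v]\Bigr),
\end{equation*}
where $\mathcal R[v]$ is a sum of terms $\int v^3$, $\int v|\nabla v|^2$, $\int v^2|\nabla v|^2$ and higher powers of $v$, each bounded by $C\|v\|_\infty(\|\nabla v\|_2^2+\|v\|_2^2)$, so $\mathcal R[v_j]=o(\|v_j\|_{W^{1,2}}^2)$. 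By the gauge conditions, the quadratically small mean, and the eigenvalues $k(k+2)$ of $-\Delta$ on $\mathbb S^3$ (with $k(k+2)\ge 8>3$ for $k\ge 2$), the quadratic form is coercive: $\|\nabla v_j\|_2^2-3\|v_j\|_2^2\ge c_0\|v_j\|_{W^{1,2}}^2+o(\|v_j\|_{W^{1,2}}^2)$.

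\emph{Third}, I would handle the two remaining terms, which are the genuine obstruction: $-16\|\nabla v\|_4^4$ enters the deficit with the ``wrong'' sign — precisely the flip relative to \cite{Frank2024b} noted after \eqref{eq:sigma_1(u)} — and $\int(1+v)\Delta v|\nabla v|^2$ carries second derivatives, not controlled by $\|v\|_{W^{1,4}}$. The decisive input is the hypothesis $\sigma_1(u_j)>0$: by \eqref{eq:sigma_1(u)} it reads $\tfrac14(1+v)\Delta v-\tfrac34|\nabla v|^2+\tfrac3{32}(1+v)^2>0$, hence $(1+v)\Delta v\,|\nabla v|^2>3|\nabla v|^4-\tfrac38(1+v)^2|\nabla v|^2$ pointwise; integrating and using $\|v_j\|_\infty\to 0$ gives $\int_{\mathbb S^3}(1+v_j)\Delta v_j|\nabla v_j|^2\diff\omega\ge 3\|\nabla v_j\|_4^4-\tfrac38\|\nabla v_j\|_2^2+o(\|v_j\|_{W^{1,2}}^2)$. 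Feeding a portion of this lower bound back into the expansion converts $-16\|\nabla v_j\|_4^4$ into a positive multiple of $\|\nabla v_j\|_4^4$, at the cost of a $O(\|\nabla v_j\|_2^2)$ term that must be reabsorbed by the quadratic form.

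\emph{The hard part} is that doing this at full strength costs $6\|\nabla v_j\|_2^2$, which the quadratic form cannot afford on the lowest active mode $k=2$, where $k(k+2)=8$ only slightly exceeds $6$; moreover $\sigma_1>0$ is a pointwise, not frequency-localized, condition, so one cannot naively apply it only to the high modes. I expect the resolution — the technical heart of the proof — to be a low/medium/high frequency splitting $v_j=v_j^{\lo}+v_j^{\me}+v_j^{\hi}$: on the finite-dimensional low band all Sobolev norms are equivalent, so $\|\nabla v_j^{\lo}\|_4^4\lesssim\|v_j^{\lo}\|_{W^{1,2}}^4=o(\|v_j^{\lo}\|_{W^{1,2}}^2)$ is harmless and, after integrating the low-frequency part of the weight by parts and using $\|\nabla v_j\|_2\to0$, the $\sigma_1$-bound need not be spent there; on the high band the quadratic form controls $\|\nabla v_j^{\hi}\|_2^2$ with a definite margin, leaving room to absorb the loss incurred from the $\sigma_1$-inequality and still produce a positive $\|\nabla v_j^{\hi}\|_4^4$; the medium band and the cross terms between bands are absorbed using $\|v_j\|_\infty\to 0$, $\|\nabla v_j\|_2\to0$ and Young's inequality. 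Assembling the bands yields $F_2[1]-F_2[1+v_j]\gtrsim\|v_j\|_{W^{1,2}}^2+\|\nabla v_j\|_4^4\gtrsim\|v_j\|_{W^{1,2}}^2+\|v_j\|_{W^{1,4}}^4$, which is the claim. The remaining ingredients — the implicit function theorem step, the exact form of $\mathcal R[v]$, and Proposition~\ref{prop:distcomp} — are routine but lengthy, and it is exactly the negative exponents in $u^{-8}$ and in the prefactor of $e_2$ that make this bookkeeping delicate, as anticipated in Remark~\ref{remarks_sigma2}.
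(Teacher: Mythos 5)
Your proposal is correct and follows essentially the same route as the paper: gauge-fixing via Proposition \ref{prop:distcomp}, a Taylor expansion producing the Hessian $8(\|\nabla r\|_2^2-3\|r\|_2^2)$, the decisive pointwise use of $\sigma_1(u_j)>0$ to control the second-derivative term at the cost of $6\|\nabla r\|_2^2$, and a low/medium/high frequency splitting with $L$ large so that this loss is only incurred on the high band, where the spectral gap absorbs it. Your grouping $16\int(1+v)\Delta v\,|\nabla v|^2-16\|\nabla v\|_4^4$ is algebraically identical to the paper's $+32\|\nabla r\|_4^4+64\int(\sigma_1(u_j)-\sigma_1(1))|\nabla r|^2$, and the remaining differences (exact orthogonality via an implicit-function argument instead of the paper's approximate orthogonality) are cosmetic.
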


The proof of these two propositions will take up the bulk of the paper. More precisely, we prove Proposition \ref{prop:glob2locrev} in Section \ref{sec:2}. Sections \ref{sec:orth} and \ref{sec:local} are in turn devoted to the proof of Proposition \ref{prop:locrev}.

\subsection{Proof of the main results}
With Propositions \ref{prop:glob2locrev} and \ref{prop:locrev} at hand, the proof of Theorem~\ref{thm:revsigma} follows by contradiction.

It is convenient to abbreviate
\begin{equation}
    \label{eq:dist}
    \dist(u)\coloneqq \inf_{\lambda,\Psi} \left( \|\lambda \, (u)_{\Psi}-1\|_{W^{1,2}(\mathbb S^{3})}^2 + \|\lambda \, (u)_{\Psi}-1\|_{W^{1,4}(\mathbb S^{3})}^4 \right).
\end{equation} 

\begin{proof}[Proof of Theorem \ref{thm:revsigma}]
   Assume by contradiction that a sequence $(u_j)\subset C^\infty(\Sph^d)$ satisfies $u_j>0$ and $\sigma_1(u_j)>0$ for all $j$ and
	\begin{equation}
		\label{eq:thmproofass}
		\frac{F_2[1]F_{1}[1]^{-2}-F_{2}[u_j] F_{1}[u_j^{-2}]^{-2}}{\dist(u_j)} \to 0. 
	\end{equation}
	exists. Since the quotient is $0$-homogeneous in $u_j$, we may assume that $\| u_j \|_{-12} = \| 1\|_{-12}$ for all $j$. Since $\dist(u_j) \leq 2 |\mathbb S^3|$ (by choosing $\lambda=0$),
	 \eqref{eq:thmproofass} implies that $F_{2}[u_j] F_{1}[u_j^{-2}]^{-2}\to F_2[1]F_{1}[1]^{-2}$ as $j\to\infty$. Proposition \ref{prop:glob2locrev} then gives $\inf_{\Psi} \| (u_j)_{\Psi} - 1 \|_{W^{1,4}(\Sph^3)} \to 0$ as $j\to\infty$. Choosing $\lambda=1$ as a competitor for the infimum in \eqref{eq:dist}, Proposition \ref{prop:locrev} is applicable, which leads to a positive, $j$-independent lower bound for the quotient in \eqref{eq:thmproofass} and thus to a contradiction.
\end{proof}

Corollary \ref{cor:interpolation_stability} follows from Theorem \ref{thm:rev} by an interpolation argument involving the Sobolev inequality \eqref{eq:Sob} and the $\sigma_2$-$\sigma_1$-{curvature} inequality \eqref{eq:sigma2sigma1-ineq}. 

\begin{proof}[Proof of Corollary \ref{cor:interpolation_stability}]
First note that by choosing $\lambda = 0$, we find $\dist(u) \leq 2 |\mathbb S^3|$. Thus, when $F_2[u] \leq 0$, we have  
  \[ F_2[1] F_1[1]^{1 - \vartheta} - F_2[u] F_1[u^{-2}]^{1 - \vartheta} \geq  F_2[1] F_1[1]^{1 - \vartheta} \geq c(\vartheta) \dist(u)\,.  \]
   Thus, we may assume  $F_2[u] > 0$ in the following.

    For $\vartheta = 3$, the statement is just Theorem \ref{thm:rev}.
  
    Suppose now that $\vartheta > 3$. Then, using \eqref{eq:Sob}, we have $F_1[u^{-2}]^{3-\vartheta}\leq F_1[1]^{3 - \vartheta}$, and hence
    \begin{align*}
        F_2[1] F_1[1]^{1 - \vartheta} - F_2[u] F_1[u^{-2}]^{1 - \vartheta}  &= F_1[1]^{3 - \vartheta} \left( \frac{F_2[1]}{F_1[1]^2} - \frac{F_2[u]}{F_1[u^{-2}]^2} \left( \frac{F_1[u^{-2}]}{F_1[1]} \right)^{3 - \vartheta} \right) \\
        &\geq  F_1[1]^{3 - \vartheta} \left( \frac{F_2[1]}{F_1[1]^2} - \frac{F_2[u]}{F_1[u^{-2}]^2}  \right)  \geq c_{ADT} F_1[1]^{3 - \vartheta} \dist(u)\,,
    \end{align*}
where we used Theorem \ref{thm:rev} for the last inequality. 

Next suppose that $\vartheta < 3$. By Theorem \ref{thm:rev}, we have 
\[ 0 <\frac{F_2[u]}{F_1[u^{-2}]^2}  \leq \frac{F_2[1]}{F_1[1]^2}  - c_{ADT} \dist(u),  
\]
in particular $c_{ADT} F_1[1]^2\dist(u) < F_2[1]$. Using this together with the concavity of $t \mapsto t^{\frac \vartheta 3}$, and inequality \eqref{eq:sigma2sigma1-ineq}, we obtain 
\begin{align*}
    F_2[u] F_1[u^{-2}]^{1 - \vartheta} & \leq (F_2[u] F_1[u^{-2}])^{1-\frac \vartheta 3}\left(\frac{F_2[1]}{F_1[1]^2}  - c_{ADT} \dist(u)\right)^{\frac{\vartheta}3} \\
    & \leq (F_2[1] F_1[1])^{1 - \frac \vartheta 3} \left(\frac{F_2[1]}{F_1[1]^2}\right)^{\frac{\vartheta}3} \left( 1- \vartheta   \frac{c_{ADT}F_1[1]^2}{3F_2[1]} \dist(u)\right) \\
    &= F_2[1] F_1[1]^{1 - \frac\vartheta 3}  \left(1 - \vartheta   \frac{c_{ADT}F_1[1]^2}{3F_2[1]} \dist(u) \right).
\end{align*}
As a consequence, we find
\begin{equation*}
    F_2[1] F_1[1]^{1 - \vartheta} - F_2[u] F_1[u^{-2}]^{1 - \vartheta} 
     \geq  F_2[1] F_1[1]^{1 - \frac \vartheta 3} \vartheta   \frac{c_{ADT}F_1[1]^2}{3F_2[1]} \dist(u)= \vartheta\frac {c_{ADT}} 3  F_1[1]^{3 - \frac\vartheta 3} \dist(u)\,. 
\end{equation*}
This completes the proof. 
\end{proof}

Finally, as already mentioned, Theorem \ref{thm:revsigma} is the special case $\vartheta = 3$ of Corollary \ref{cor:interpolation_stability}.

\section{Global-to-local reduction}\label{sec:2}

{In this section our goal is to prove the first step of the Bianchi--Egnell method.} 

In case $d=3$, the sign of the terms containing derivatives changes in $F_2[u]$, and 
the functional $F_2$ is not bounded from below anymore but bounded from above, while keeping the same set of minimizers -- constant functions up to M\"obius transformations. 
In \cite{Frank2024b} a monotonicity result by Guan and Wang \cite{Guan2004} was used in order to reduce the analysis to the compactness properties of minimizing sequences for the Sobolev inequality. A similar key role in the proof of Proposition \ref{prop:glob2locrev} will be played by the inequality \eqref{eq:reverse_sigma2} (or indeed any inequality of the family \eqref{eq:interpol} with $\vartheta < 3$).

\begin{proof}[Proof of Proposition \ref{prop:glob2locrev}]
	Consider $(u_j)\subset C^\infty(\Sph^3)$ with $u_j>0$ and $\sigma_1(u_j)>0$ for all $j$ that satisfy
	$$
	\frac{F_{2}[u_j]}{F_1[u^{-2}_j]^2}\to  \frac{F_{2}[1]}{F_1[1]^2}
	\qquad\text{and}\qquad
	\| u_j \|_{-12} \to \| 1\|_{-12}
	$$ as $j\to\infty$.
    To prove the proposition, it suffices to show that there is a sequence $(\Psi_j)$ of M\"obius transformations such that
	\begin{equation}
		\label{eq:glob2locgoal}
		(u_j)_{\Psi_j}\to 1
		\qquad\text{in}\ W^{1,4}(\Sph^3) \,.
	\end{equation}
	If we show for an arbitrary subsequence of $(u_j)_{\Psi_j}$ that a further subsequence satisfies this convergence, then the conclusion of Proposition \ref{prop:glob2locrev} holds for the whole sequence. Thus, we {can} pass to a subsequence without loss of generality. We further assume $\|u_j\|_{-12}^{-12}=\|1\|_{-12}^{-12}=|\mathbb S^3|$ by scaling invariance of $F_{2}$ and $F_1$.

We have
\[ 1 = \lim_{j \to \infty} \frac{F_{2}[u_j]}{F_1[u^{-2}_j]^2} \frac{F_1[1]^2}{F_{2}[1]} \leq \limsup_{j \to \infty} \frac{F_2[u_j]}{F_2[1]} \cdot  \limsup_{j \to \infty} \frac{F_1[1]^2}{F_1[u_j^{-2}]^2} \leq 1,  \]
where the last inequality follows by \eqref{eq:reverse_sigma2} and \eqref{eq:Sob}. This chain of inequalities implies $F_1[u_j^{-2}] \to  F_1[1]$ and $F_2[u_j]\to F_2[1]$. Thus, if we define the positive functions $$w_j\coloneqq u_j^{-2}$$ on $\Sph^3$,
	then $$F_1[w_j]  \to F_1[1]\quad \text{as} \ j\to\infty\qquad \text{and}\qquad \| w_j\|_6^6 = \|u_j\|_{-12}^{-12} = |\Sph^3|
	\quad\text{for all}\ j \,.$$
	
    {Next, we apply} the classification of optimizers \cite{Rodemich1966, Aubin1976, Talenti1976} and Lions's concentration compactness \cite{Lions1985,Lions1985a} for the Sobolev inequality on $\R^3$. {After} translating it via stereographic projection to $\mathbb S^3$ and passing to a subsequence if necessary, there are M\"obius transformations $(\Psi_j)$ such that $[w_j]_{\Psi_j}\to 1$ in $W^{1,2}(\Sph^3)$ {as $j\to\infty$.} Let us set 
	$$
	\Tilde w_j \coloneqq [w_j]_{\Psi_j} 
	\qquad\text{and}\qquad \Tilde u_j \coloneqq (u_j)_{\Psi_j} \,.
	$$
	
   To deduce $\Tilde u_j\to 1$ in $W^{1,4}(\Sph^3)$ from $\Tilde w_j \to 1$ in $W^{1,2}(\Sph^3)$, we note that by conformal invariance
	\begin{equation}
		\label{eq:glob2locproof}
		F_2[1] + o(1) = F_{2}[u_j] = F_{2}[\Tilde u_j] = |\mathbb S^3|^{\frac13} \int_{\Sph^3}\left(\frac34 \Tilde u_j^4-f_j\right)\,\mathrm d\omega\,,
	\end{equation} where
	\begin{equation}
	    \label{eq:fj}f_j \coloneqq 64 \left(\sigma_1(\Tilde u_j)+\frac{1}{2} |\nabla \Tilde u_j|^2+\frac{1}{32} \Tilde u_j^2\right) |\nabla \Tilde u_j|^2\,.
	\end{equation}
	
   In the last step of \eqref{eq:glob2locproof}, we used $\|u_j\|_{-12}^{-12} = |\Sph^3|$ for all $j$.
      Since $\Tilde w_j \to 1$ in $W^{1,2}(\mathbb S^3)$, and thus pointwise almost everywhere along a subsequence, we infer that $\Tilde u_j = \Tilde w_j^{-1/2} \to 1$ pointwise almost everywhere. Thus, $(\Tilde u_j)$ satisfies the hypotheses of Lemma  \ref{lem:uj_bounded} below, and we deduce that $(\Tilde u_j)$
    converges uniformly to $1$ after possibly passing to a subsequence.
    In particular, we have $\Tilde u_j\to 1$ in $L^4(\Sph^3)$ and, consequently,
	$$
	\frac{3}{4}|\mathbb S^3|^{\frac13} \int_{\Sph^3}  \Tilde u_j^4\,\mathrm d\omega = \frac{3}{4}|\mathbb S^3|^{\frac43}   + o(1)  =F_2[1] + o(1) \,.
	$$
	Thus, \eqref{eq:glob2locproof} leads to
	$$
	\int_{\Sph^3} f_j \,\mathrm d\omega = o(1) \,.
	$$
	Since $f_j$ is a sum of nonnegative terms, $|\nabla \Tilde u_j|$ tends to $0$ in $L^4(\Sph^3)$, which proves \eqref{eq:glob2locgoal} for normalized subsequences and hence completes the proof.
\end{proof}

{The following lemma is a peculiarity of optimizing sequences of the three-dimensional $\sigma_2$-curvature inequality \eqref{eq:reverse_sigma2}  when comparing it with its higher dimensional versions.}
It describes how the pointwise convergence of an optimizing sequence can be upgraded to uniform convergence. 

\begin{lemma}
    \label{lem:uj_bounded}
    Let {$(u_j)\subset C^\infty(\Sph^3)$} with $u_j > 0$ and $\sigma_1(u_j)>0$ satisfy $F_2[u_j]\to F_2[1]$ for $j\to \infty$. Then for all $j$ sufficiently large, one has
    \begin{equation}
        \label{eq:W14L4bound}
        \int_{\Sph^3} |\nabla u_j|^4 \, \mathrm d \omega \leq {\frac{3}{128}} \int_{\Sph^3} u_j^4 \, \mathrm d \omega\,. 
    \end{equation} 
    If in addition for almost every $\omega \in \Sph^3$ the sequence $(u_j(\omega))$ is bounded, then $(u_j)$ is in fact bounded in $W^{1,4}(\Sph^3)$ and converges uniformly along a subsequence.  
\end{lemma}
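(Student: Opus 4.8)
\textbf{Proof plan for Lemma \ref{lem:uj_bounded}.}

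The plan is to split the argument into the two advertised parts. For the first part, I want to extract inequality \eqref{eq:W14L4bound} directly from the hypothesis $F_2[u_j]\to F_2[1]$ together with the reverse $\sigma_2$-curvature inequality \eqref{eq:reverse_sigma2} and the definition \eqref{eq:e2(u)_definition} of $e_2(u)$. By scaling ($F_2$ is $0$-homogeneous) one may normalize $\|u_j\|_{-12}^{-12} = |\Sph^3|$, so that $F_2[u_j] = \int_{\Sph^3} e_2(u_j)\,\mathrm d\omega$. Writing out $e_2$, one gets
$$ \int_{\Sph^3} e_2(u_j)\,\mathrm d\omega = \frac34 \int_{\Sph^3} u_j^4\,\mathrm d\omega - 64\int_{\Sph^3}\Big(\sigma_1(u_j)+\tfrac12|\nabla u_j|^2 + \tfrac1{32}u_j^2\Big)|\nabla u_j|^2\,\mathrm d\omega\,. $$
The key observation is that the integrand in the second term is pointwise $\geq \tfrac12|\nabla u_j|^4$ once one knows $\sigma_1(u_j) \geq 0$ (which is part of the hypothesis, since $\sigma_1(u_j)>0$) and $u_j > 0$. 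Hence $F_2[u_j] \leq \tfrac34\int u_j^4 - 32\int|\nabla u_j|^4$. Combining this with $F_2[u_j] \geq F_2[1] + o(1) = \tfrac34|\Sph^3|^{4/3}\cdot\text{(normalization)} + o(1) > 0$ and being a bit careful with how large the error is relative to $\int u_j^4$ (one can also use $F_2[u_j] > 0$ for $j$ large, which follows from $F_2[u_j]\to F_2[1] > 0$), we get $32\int|\nabla u_j|^4 \leq \tfrac34\int u_j^4$, i.e. $\int|\nabla u_j|^4 \leq \tfrac{3}{128}\int u_j^4$, which is \eqref{eq:W14L4bound}. The normalization is harmless for \eqref{eq:W14L4bound} since both sides are $4$-homogeneous in $u_j$, so the bound holds without normalizing.

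For the second part, assume in addition that $(u_j(\omega))$ is bounded for a.e.\ $\omega$. The plan is: (i) upgrade the pointwise bound to a uniform $L^\infty$ bound via Egorov's theorem combined with the differential-inequality structure coming from $\sigma_1(u_j) > 0$; (ii) then combine with \eqref{eq:W14L4bound} to get boundedness in $W^{1,4}(\Sph^3)$; (iii) then upgrade to uniform convergence. For (i): from $\sigma_1(u_j) = \tfrac18\Delta(u_j^2) - |\nabla u_j|^2 + \tfrac3{32}u_j^2 > 0$ one gets $-\Delta(u_j^2) \leq \tfrac34 u_j^2$ (dropping the favorable $-8|\nabla u_j|^2$ term), i.e.\ $u_j^2$ is a positive subsolution of $-\Delta \phi = \tfrac34\phi$ on $\Sph^3$. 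By the mean-value/Harnack-type estimate for subsolutions (or by testing against the heat semigroup $e^{t\Delta}$), $\|u_j^2\|_{L^\infty} \lesssim \|u_j^2\|_{L^1}$ uniformly. Once the a.e.\ pointwise bound gives, via dominated convergence on a large-measure set together with the uniform $L^1$ control of $u_j^2$ that this implies, a uniform bound $\|u_j\|_{L^1}\leq C$, we conclude $\|u_j\|_{L^\infty}\leq C'$ uniformly. For (ii): with $\|u_j\|_\infty$ uniformly bounded, $\int u_j^4 \leq C$, so \eqref{eq:W14L4bound} gives $\int|\nabla u_j|^4 \leq C$, hence $(u_j)$ is bounded in $W^{1,4}(\Sph^3)$. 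For (iii): by the Sobolev embedding $W^{1,4}(\Sph^3)\hookrightarrow C^{0,1/4}(\Sph^3)$ (since $4 > 3$), the bounded sequence $(u_j)$ is precompact in $C(\Sph^3)$ by Arzel\`a--Ascoli, so a subsequence converges uniformly.

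The main obstacle I expect is step (i): turning the a.e.\ pointwise boundedness into a genuine uniform $L^\infty$ bound. The a.e.\ hypothesis a priori allows $\|u_j\|_\infty$ to blow up on a null set, and one must rule this out using the constraint $\sigma_1(u_j) > 0$. The cleanest route is probably: use the subsolution property $-\Delta(u_j^2)\leq \tfrac34 u_j^2$ to run a De Giorgi--Moser iteration (or simply invoke the standard local boundedness estimate for subsolutions of $-\Delta\phi \leq c\phi$ on a closed manifold), yielding $\|u_j^2\|_\infty \leq C_0\|u_j^2\|_{L^1}$ with $C_0$ independent of $j$; then it remains only to bound $\|u_j\|_{L^2}^2 = \|u_j^2\|_{L^1}$ uniformly, which follows because a.e.\ pointwise boundedness plus Egorov gives uniform integrability of $u_j^2$ on a set of measure $\geq |\Sph^3|-\delta$, and on the complement one controls the mass using $\|u_j\|_6$ (equal to $|\Sph^3|^{1/6}$ after normalization) and Hölder. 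This last point — propagating $L^6$ normalization to a uniform $L^2$, hence $L^\infty$, bound — is the place where the specific structure of the $3$-dimensional problem (the negative exponent $-12$ in $\|u_j\|_{-12}$ versus the positive exponent $6$ in $\|w_j\|_6$) has to be handled with care.
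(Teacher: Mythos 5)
Your first part coincides with the paper's argument: positivity of $F_2[u_j]$ for large $j$ plus the fact that $f_j$ (the gradient part of $e_2$) is a sum of nonnegative terms bounded below by $32|\nabla u_j|^4$ yields \eqref{eq:W14L4bound}, and the bound is indeed $4$-homogeneous, so no normalization is needed. For the second part, however, your key step fails as written. You propose to bound $\|u_j\|_{L^2}^2=\|u_j^2\|_{L^1}$ by combining Egorov on a large set with H\"older on the exceptional set, ``using $\|u_j\|_6$ (equal to $|\mathbb S^3|^{1/6}$ after normalization)''. There is no such normalization: the lemma assumes none, and the normalization used elsewhere in the paper is $\|u_j\|_{-12}^{-12}=|\mathbb S^3|$, i.e.\ it is $w_j=u_j^{-2}$, not $u_j$, whose $L^6$-norm is fixed. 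A bound on $\int u_j^{-12}$ only prevents $u_j$ from being small on large sets; it gives no control whatsoever of $\int_E u_j^2$ on a small exceptional set, so the H\"older step has nothing to work with. This is precisely the three-dimensional peculiarity (negative conformal exponent) that the lemma is designed to handle, and your proof of the uniform $L^\infty$ bound therefore has a genuine gap at its crucial point: almost-everywhere pointwise boundedness alone does not give a uniform $L^1$ or $L^2$ bound (think of increasingly tall, thin spikes), so some additional mechanism must close the loop.

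The gap is repairable, and it is worth comparing routes. Within your own framework, the subsolution observation $-\Delta(u_j^2)\le \tfrac34 u_j^2$ (from $\sigma_1(u_j)>0$) together with the Moser-type estimate $\|u_j^2\|_{L^\infty}\le C_0\|u_j^2\|_{L^1}$ does suffice, but the correct closing argument is a normalization-free contradiction: if $\|u_j^2\|_{L^1}\to\infty$ along a subsequence, set $\phi_j:=u_j^2/\|u_j^2\|_{L^1}$; then $\int_{\Sph^3}\phi_j\,\mathrm d\omega=1$, $\phi_j\le C_0$ pointwise, and the a.e.\ pointwise boundedness of $(u_j)$ forces $\phi_j\to0$ a.e., so dominated convergence gives $\int\phi_j\to0$, a contradiction; this bounds $\|u_j\|_2$, hence $\|u_j\|_\infty$, and the rest of your steps (ii)--(iii) then go through. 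The paper avoids elliptic machinery altogether: \eqref{eq:W14L4bound} plus H\"older and Morrey give directly $[u_j]_{C^{0,1/4}}\lesssim\|u_j\|_{W^{1,4}}\lesssim\|u_j\|_\infty$, i.e.\ \eqref{eq:asshölderbound}, and the elementary contradiction argument of Lemma \ref{lem:hölder-infty} (a point where $u_j$ is comparable to $\|u_j\|_\infty$ forces nearby values to be comparable, contradicting a.e.\ boundedness) yields the uniform $L^\infty$ bound and, via Arzel\`a--Ascoli, uniform convergence. Your elliptic route is a legitimate alternative and uses the constraint $\sigma_1(u_j)>0$ more directly, but it needs the dominated-convergence closure above, not the $L^6$ normalization you invoked.
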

The bound \eqref{eq:W14L4bound} is very strong since it says that the $W^{1,4}$-norm (and hence, by Morrey's embedding, the $C^{0, 1/4}$-norm) of any minimizing sequence is equivalent to its $L^4$-norm. As the proof will show, this is a consequence of the mixed signs in $e_2(u)$ coming from the reverse setting.  

\begin{proof}
    Since $F_2[1] > 0$, {we see thanks to \eqref{eq:glob2locproof} from the previous proof} that 
    \[ 0 <  F_{2}[u_j] =   |\mathbb S^3|^{\frac13} \int_{\Sph^3}\left(\frac34 u_j^4-f_j\right)\,\mathrm d\omega\,, \]
    for all $j$ large enough, {where $f_j$ is defined as in \eqref{eq:fj} but with $u_j$ instead of $\Tilde u_j$.} As $f_j$ is a sum of nonnegative terms, this yields 
    \[ \frac{3}{4} \int_{\Sph^3} u_j^4 \,\mathrm d\omega> \int_{\Sph^3} f_j \,\mathrm d\omega\geq 32 \int_{\Sph^3} |\nabla u_j|^4\,\mathrm d\omega\,, \]
    which is \eqref{eq:W14L4bound}. 

    To prove the second part of the lemma, we note that $(u_j)$ satisfies \eqref{eq:asshölderbound} by \eqref{eq:W14L4bound} together with Morrey's and Hölder's inequalities. Thus, Lemma \ref{lem:hölder-infty} below yields that $(u_j)$ is bounded in $L^\infty(\Sph^3)$ and uniformly convergent along a subsequence. Applying once more Hölder's inequality and \eqref{eq:W14L4bound}, it follows that  $(u_j)$ is bounded in $W^{1,4}(\Sph^3)$. 
\end{proof}

Now we are left to deduce the conclusion of the previous lemma from a H\"older-space version of \eqref{eq:W14L4bound}. The next lemma then concludes the global-to-local reduction. {It holds for general dimensions.}

\begin{lemma}
    \label{lem:hölder-infty}
    Let {$d \in \N$} and let $(u_j) \subset C^\infty(\Sph^d)$ with $u_j > 0$. Suppose that for almost every $\omega \in \Sph^d$ the sequence $(u_j)(\omega)$ is bounded. Suppose {further} that for some $\alpha \in (0,1]$ and $C > 0$ we have
    \begin{equation}
        \label{eq:asshölderbound}
        [u_j]_{C^{0,\alpha}(\Sph^d)}  \leq C \|u_j\|_{L^\infty(\Sph^d)}\,. 
    \end{equation}
    Then $u_j$ is uniformly bounded in $L^\infty(\Sph^d)$. In particular, $u_j$ converges uniformly along a subsequence. 
\end{lemma}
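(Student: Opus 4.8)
The plan is to argue by contradiction. Suppose $\|u_j\|_{L^\infty(\Sph^d)} \to \infty$ along a subsequence (which we do not relabel), and consider the normalized functions $v_j \coloneqq u_j / \|u_j\|_{L^\infty(\Sph^d)}$. Then $\|v_j\|_{L^\infty(\Sph^d)} = 1$ for every $j$, and the scaling hypothesis \eqref{eq:asshölderbound} is homogeneous of degree one, so it is inherited: $[v_j]_{C^{0,\alpha}(\Sph^d)} \leq C$. Thus $(v_j)$ is a bounded sequence in $C^{0,\alpha}(\Sph^d)$, and by the Arzelà--Ascoli theorem it has a subsequence converging uniformly to some $v \in C^{0,\alpha}(\Sph^d)$ with $\|v\|_{L^\infty(\Sph^d)} = 1$. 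In particular $v \not\equiv 0$, so there is a point $\omega_0 \in \Sph^d$ and $\delta > 0$ with $|v(\omega_0)| \geq \tfrac12$, and by uniform continuity of $v$ (via the H\"older bound) there is a geodesic ball $B = B_r(\omega_0)$ on which $|v| \geq \tfrac14$, say.

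Next I would combine the uniform convergence $v_j \to v$ with the pointwise boundedness hypothesis on $(u_j)$ to reach a contradiction. Fix any $\omega \in B$ at which $(u_j(\omega))_j$ is bounded; by hypothesis almost every $\omega$ qualifies, so we may pick such an $\omega$ inside $B$. Then
\[
|u_j(\omega)| = \|u_j\|_{L^\infty(\Sph^d)}\,|v_j(\omega)| \geq \|u_j\|_{L^\infty(\Sph^d)} \cdot \tfrac18
\]
for all $j$ large enough that $|v_j(\omega)| \geq \tfrac18$ (which holds since $v_j(\omega) \to v(\omega)$ and $|v(\omega)| \geq \tfrac14$). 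Since $\|u_j\|_{L^\infty(\Sph^d)} \to \infty$, this forces $u_j(\omega) \to \infty$, contradicting the boundedness of $(u_j(\omega))_j$. Hence $\|u_j\|_{L^\infty(\Sph^d)}$ is bounded, which is the first assertion. Once $(u_j)$ is uniformly bounded in $L^\infty$, the hypothesis \eqref{eq:asshölderbound} gives a uniform bound on $[u_j]_{C^{0,\alpha}(\Sph^d)}$, so $(u_j)$ is bounded in $C^{0,\alpha}(\Sph^d)$, and a second application of Arzelà--Ascoli yields a uniformly convergent subsequence.

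The argument is essentially soft; the only point requiring a little care is the interaction between the \emph{almost everywhere} pointwise boundedness hypothesis and the fact that the contradiction is extracted at a \emph{single} point. This is harmless because the exceptional null set cannot contain the whole ball $B$ (indeed it has measure zero while $B$ has positive measure), so a good point $\omega \in B$ always exists. A minor secondary point is to make sure the Arzelà--Ascoli compactness is applied correctly on the compact manifold $\Sph^d$ with its geodesic distance, and that the H\"older seminorm bound genuinely provides equicontinuity independent of $j$ — both are standard. I do not anticipate any real obstacle; the lemma is a clean rescaling-and-compactness statement, and the scaling invariance of \eqref{eq:asshölderbound} is exactly what makes it work.
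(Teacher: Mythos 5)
Your proof is correct. It differs from the paper's argument mainly in packaging: you normalize, setting $v_j = u_j/\|u_j\|_{L^\infty}$, use the scale invariance of \eqref{eq:asshölderbound} to get a uniform $C^{0,\alpha}$ bound on $(v_j)$, extract a uniform limit $v$ with $\|v\|_{L^\infty}=1$ by Arzelà--Ascoli, and then derive the contradiction at an a.e.\ ``good'' point inside a ball where $|v|\geq \tfrac14$. The paper instead argues directly with $u_j$ itself, without any compactness extraction for a normalized sequence: it picks near-maximum points $\omega_j$ with $u_j(\omega_j)\geq \tfrac12\|u_j\|_{L^\infty}$, lets $\omega_j\to\omega_\infty$, chooses a good point $\omega_0$ with $0<|\omega_\infty-\omega_0|^\alpha\leq (4C)^{-1}$ (possible since the exceptional set is null), and applies \eqref{eq:asshölderbound} as a two-point estimate to get $1 - u_j(\omega_0)/u_j(\omega_j) \leq 2C|\omega_j-\omega_0|^\alpha$, which forces $1\leq\tfrac12$ in the limit. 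Both proofs rest on the same mechanism -- the seminorm-to-sup-norm bound forces $u_j$ to be comparable to $\|u_j\|_{L^\infty}$ on a ball of definite size around its maximum, and a.e.\ boundedness supplies a point there -- so your route is essentially a compactness rephrasing of the paper's quantitative estimate; it costs one extra Arzelà--Ascoli application in the contradiction step but arguably makes the role of the scale invariance of \eqref{eq:asshölderbound} more transparent. Your handling of the two delicate points (the null exceptional set versus a single evaluation point, and the interplay of subsequences with the full-sequence pointwise boundedness hypothesis) is sound, and the concluding Arzelà--Ascoli step for the uniform convergence of $(u_j)$ coincides with the paper's.
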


\begin{proof}
    By contradiction, if the conclusion is not true, then there are $(\omega_j) \subset \Sph^d$ such that 
    \[ u_j(\omega_j) \geq \frac{1}{2} \|u_j\|_\infty \to \infty \qquad \text{ as } j \to \infty\,.  \]
    By taking a subsequence, we may assume $\omega_j \to \omega_\infty$ as $j \to \infty$ for some $\omega_\infty \in \Sph^d$. By assumption, there is {$\omega_0\in \mathbb S^d$} with $0< |\omega_\infty - \omega_0|^\alpha \leq {(4C)^{-1}}$ such that $(u_j(\omega_0))$ is uniformly bounded. Assumption \eqref{eq:asshölderbound} gives that 
    \[ \frac{u_j(\omega_j) - u_j(\omega_0)}{|\omega_j - \omega_0|^\alpha} \leq [u_j]_{C^{0,\alpha}(\Sph^d)}  \leq C \|u_j\|_{L^\infty(\Sph^d)} \leq 2C u_j(\omega_j)\,, \]
    or equivalently
    \[ 1 - \frac{u_j(\omega_0)}{u_j(\omega_j)} \leq 2 C |\omega_j - \omega_0|^\alpha\,. \]
    Letting $j \to \infty$, the choice of $\omega_0$ yields $1 \leq \frac{1}{2}$, a contradiction. Hence, $(u_j)$ is uniformly bounded in $L^\infty(\Sph^d)$.
    
    Together with \eqref{eq:asshölderbound}, the Arzela--Ascoli theorem then provides uniform convergence of a subsequence of $(u_j)$.
\end{proof}

\section{Orthogonality conditions and comparability between $W^{1,2}$ and $W^{1,4}$} 
\label{sec:orth}
{In this section we make some preparations to prove the second step of the Bianchi--Egnell method. More specifically, we show that} the error term arising from the distance minimization in \eqref{eq:dist} can be chosen to be `approximately' orthogonal to spherical harmonics of degree $0$ and $1$ {with respect to the $L^2$-inner product} while still vanishing in the $W^{1,4}$-norm at the same time.

\begin{proposition} \label{prop:distcomp} 
	Let $(u_j)\subset W^{1,4}(\Sph^3)$ with $\| u_j \|_{-12} = \|1\|_{-12}$ for all $j$ and $\inf_\Psi \|(u_j)_\Psi -1\|_{W^{1,4}}\to 0$ as $j\to\infty$. Then there is a sequence $(\Psi_j)$ of M\"obius transformations such that
	$$
	r_j \coloneqq (u_j)_{\Psi_j} - 1
	$$
	satisfies, for all sufficiently large $j$,
	\begin{equation}
		\label{eq:distanceboundsrev}
			\| r_j \|_{W^{1,2}} = \inf_\Psi \|(u_j)_\Psi -1\|_{W^{1,2}}
		\qquad\text{and}\qquad
		\| r_j \|_{W^{1,4}} \lesssim \inf_\Psi \|(u_j)_\Psi -1\|_{W^{1,4}}
	\end{equation}
	as well as
	\begin{equation}\label{eq:orthorev}
		\left|	\int_{\mathbb S^3} r_j \, \mathrm d \omega \right| \lesssim \|r_j\|^{2}_2  \qquad	\text{and} \qquad
		\left| \int_{\mathbb S^3} \omega_i \, r_j \, \mathrm d \omega \right| \lesssim \|r_j\|^2_{W^{1,2}} \,, \ i=1,\dots,4\,.
	\end{equation}
\end{proposition}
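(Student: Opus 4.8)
The plan is to run the Bianchi--Egnell normalization, in the form dictated by the negative conformal exponent. Since $\inf_\Psi\|(u_j)_\Psi-1\|_{W^{1,4}}\to0$, I would first fix M\"obius transformations $\Psi_j^{(0)}$ with $\|(u_j)_{\Psi_j^{(0)}}-1\|_{W^{1,4}}\le 2\inf_\Psi\|(u_j)_\Psi-1\|_{W^{1,4}}$ (when this infimum vanishes, $u_j$ is itself a M\"obius image of $1$ and the claims are trivial) and set $v_j\coloneqq(u_j)_{\Psi_j^{(0)}}$, so that $v_j\to1$ in $W^{1,4}$, hence in $L^\infty$ by the embedding $W^{1,4}(\Sph^3)\hookrightarrow C^{0,1/4}(\Sph^3)$; in particular $v_j\ge\tfrac12$ for $j$ large. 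Parametrizing the conformal transvections of $\Sph^3$ by $\xi\in B^4$ with $\Phi_0=\mathrm{id}$, every M\"obius transformation equals $\Phi_\xi\circ R$ for some rotation $R\in SO(4)$; since rotations have unit Jacobian, preserve the $W^{1,2}$-norm and fix $1$, and since the cocycle property of the Jacobian gives $(u_j)_{\Psi_j^{(0)}\circ\Phi}=(v_j)_\Phi$, minimizing the $W^{1,2}$-distance over M\"obius transformations reduces to minimizing $F_j(\xi)\coloneqq\|(v_j)_{\Phi_\xi}-1\|_{W^{1,2}}^2$ over $\xi\in B^4$.

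The crucial structural point, opposite to the case $d>4$ of \cite{Frank2024b}, is the coercivity of $F_j$: because the conformal weight in $(u)_\Psi=J_\Psi^{-1/12}\,u\circ\Psi$ carries a \emph{negative} exponent, the model function $(1)_{\Phi_\xi}=J_{\Phi_\xi}^{-1/12}$ does not stay bounded but blows up in $L^2(\Sph^3)$ as $|\xi|\to1$ (writing $J_{\Phi_\xi}=\rho_\xi^{3}$, the conformal factor satisfies $\rho_\xi\to0$ almost everywhere, so $\|(1)_{\Phi_\xi}\|_{L^2}=\|\rho_\xi^{-1/4}\|_{L^2}\to\infty$ by Fatou). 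Since $v_j\ge\tfrac12$ this forces $F_j(\xi)\to\infty$ as $|\xi|\to1$, so $F_j$ attains its minimum at some $\xi_j\in B^4$ with $F_j(\xi_j)\le F_j(0)=\|v_j-1\|_{W^{1,2}}^2\to0$. A soft compactness argument then yields $\xi_j\to0$: along a subsequence $\xi_j\to\xi_*\in\overline{B^4}$, the possibility $0<|\xi_*|<1$ is excluded because $F_j\to G$ locally uniformly near $\xi_*$ with $G(\xi)\coloneqq\|J_{\Phi_\xi}^{-1/12}-1\|_{W^{1,2}}^2$ vanishing only at $\xi=0$ (a volume-preserving conformal map of $\Sph^3$ is an isometry), contradicting $F_j(\xi_j)\to0$, while $|\xi_*|=1$ is excluded by the blow-up. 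Putting $\Psi_j\coloneqq\Psi_j^{(0)}\circ\Phi_{\xi_j}$ and $r_j\coloneqq(u_j)_{\Psi_j}-1=(v_j)_{\Phi_{\xi_j}}-1$, the first identity in \eqref{eq:distanceboundsrev} is exactly the minimality of $\xi_j$. I expect this step --- extracting both the existence of $\xi_j$ and, later, its quantitative smallness out of the degeneration of $(1)_{\Phi_\xi}$ --- to be the main obstacle.

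For the $W^{1,4}$-comparability I would decompose $r_j=\bigl(J_{\Phi_{\xi_j}}^{-1/12}-1\bigr)+(v_j-1)_{\Phi_{\xi_j}}$. Since $\xi_j$ stays in a fixed compact subset of $B^4$ for $j$ large, the Jacobian of $\Phi_{\xi_j}$, its inverse and its first derivatives are uniformly bounded, so $\|(v_j-1)_{\Phi_{\xi_j}}\|_{W^{1,p}}\lesssim\|v_j-1\|_{W^{1,p}}$ for $p\in\{2,4\}$; and $\xi\mapsto J_{\Phi_\xi}^{-1/12}$ is smooth with injective differential at $\xi=0$ (its $\xi_k$-derivative is a nonzero multiple of the coordinate function $\omega_k$), so $\|J_{\Phi_{\xi_j}}^{-1/12}-1\|_{W^{1,p}}\asymp|\xi_j|$ for $j$ large. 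Combining the $p=2$ estimates with $\|r_j\|_{W^{1,2}}^2=F_j(\xi_j)\le\|v_j-1\|_{W^{1,2}}^2$ yields $|\xi_j|\lesssim\|v_j-1\|_{W^{1,2}}\lesssim\|v_j-1\|_{W^{1,4}}$, and then the $p=4$ estimates give $\|r_j\|_{W^{1,4}}\lesssim|\xi_j|+\|v_j-1\|_{W^{1,4}}\lesssim\|v_j-1\|_{W^{1,4}}\le 2\inf_\Psi\|(u_j)_\Psi-1\|_{W^{1,4}}$, which is the second part of \eqref{eq:distanceboundsrev}. In particular $\|r_j\|_{L^\infty}\lesssim\|r_j\|_{W^{1,4}}\to0$.

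Finally, for \eqref{eq:orthorev}, orthogonality to the constant comes from the normalization: conformal invariance of $\|\cdot\|_{-12}$ and $\|u_j\|_{-12}=\|1\|_{-12}$ give $\int_{\Sph^3}(1+r_j)^{-12}\,\mathrm d\omega=|\Sph^3|$, and since $\|r_j\|_\infty\to0$, Taylor's formula yields $-12\int_{\Sph^3}r_j\,\mathrm d\omega+78\int_{\Sph^3}r_j^2\,\mathrm d\omega+O(\|r_j\|_\infty\|r_j\|_2^2)=0$, hence $\bigl|\int_{\Sph^3}r_j\,\mathrm d\omega\bigr|\lesssim\|r_j\|_2^2$. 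Orthogonality to the degree-$1$ harmonics is the Euler--Lagrange condition at the interior critical point $\xi_j$ of $F_j$: $\tfrac{d}{dt}\big|_{t=0}\|(w)_{\exp(tX)}-1\|_{W^{1,2}}^2=0$ for $w=1+r_j$ and every conformal vector field $X$ on $\Sph^3$. For $X=\nabla\phi$ with $-\Delta\phi=3\phi$ one has $\Div X=\Delta\phi=-3\phi$, hence $\tfrac{d}{dt}\big|_{t=0}(w)_{\exp(tX)}=-\tfrac1{12}(\Div X)\,w+X\cdot\nabla w=\tfrac14\phi+\bigl(\tfrac14\phi\,r_j+\nabla\phi\cdot\nabla r_j\bigr)$, so the critical-point relation reads $\tfrac14\langle\phi,r_j\rangle_{W^{1,2}}=-\langle\tfrac14\phi\,r_j+\nabla\phi\cdot\nabla r_j,\,r_j\rangle_{W^{1,2}}$, whose right-hand side is $O(\|r_j\|_{W^{1,2}}^2)$ after one integration by parts (the Killing directions contribute nothing, matching the rotation invariance of the $W^{1,2}$-norm). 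Since $\langle\phi,r_j\rangle_{W^{1,2}}=\int_{\Sph^3}\phi\,r_j\,\mathrm d\omega-\int_{\Sph^3}(\Delta\phi)\,r_j\,\mathrm d\omega=4\int_{\Sph^3}\phi\,r_j\,\mathrm d\omega$ and $\phi$ runs through $\omega_1,\dots,\omega_4$, this gives $\bigl|\int_{\Sph^3}\omega_i\,r_j\,\mathrm d\omega\bigr|\lesssim\|r_j\|_{W^{1,2}}^2$ and completes \eqref{eq:orthorev}.
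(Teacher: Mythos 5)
Your argument is correct and rests on the same two mechanisms that the paper exploits: the blow-up of $(1)_{\Phi_\xi}=J_{\Phi_\xi}^{-1/12}$ as $|\xi|\to 1$ (a consequence of the negative conformal exponent, which is exactly the content of the first half of Lemma~\ref{lem:blowup} and gives attainment of the $W^{1,2}$-infimum), and the uniform conformal bounds for parameters in a compact subset of $B^4$ (the paper's \eqref{eq:confbddloc2}). Where you genuinely diverge is the comparability estimate in \eqref{eq:distanceboundsrev}: the paper compares the exact $W^{1,2}$- and $W^{1,4}$-minimizers $\Psi_j'$, $\Psi_j''$, uses Lemma~\ref{lem:blowup} to keep the relative M\"obius parameter $\tilde\xi_j$ compact, and reaches a contradiction from the incompatible limits $\|\Delta_j\|_{W^{1,4}}\to 1$ and $\|\Delta_j\|_{W^{1,2}}\to 0$, via equivalence of norms of the family $(1)_{\Psi_\xi}-1$ on compact parameter sets; you instead pre-compose with a near-minimizer of the $W^{1,4}$-distance, show the remaining $W^{1,2}$-optimal parameter satisfies the quantitative bound $|\xi_j|\lesssim\|v_j-1\|_{W^{1,2}}$ through the nondegeneracy $\|J_{\Phi_\xi}^{-1/12}-1\|_{W^{1,p}}\asymp|\xi|$ near $\xi=0$, and then conclude directly by the triangle inequality. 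Your route is more direct and yields the constant without a compactness-by-contradiction step, whereas the paper's Lemma~\ref{lem:blowup} is formulated for general $d$, $q>d$ and is reused elsewhere. For the degree-one orthogonality the paper simply cites \cite[Lemma 10]{Frank2024b}; your Euler--Lagrange derivation at the interior critical point $\xi_j$ is a correct self-contained substitute, with two small points to make precise: since $u_j$ is only assumed to lie in $W^{1,4}$, the $t$-derivative of $\|(w)_{\exp(tX)}-1\|_{W^{1,2}}^2$ should be computed after a change of variables (so that only first derivatives of $r_j$ appear, the pointwise identity $\int \nabla(\nabla\phi\cdot\nabla r_j)\cdot\nabla r_j\,\mathrm d\omega=\tfrac12\int\nabla\phi\cdot\nabla(|\nabla r_j|^2)\,\mathrm d\omega+\int\nabla^2\phi(\nabla r_j,\nabla r_j)\,\mathrm d\omega$ not being literally defined at this regularity), and the edge case $\inf_\Psi\|(u_j)_\Psi-1\|_{W^{1,4}}=0$ requires the same blow-up fact to upgrade the vanishing infimum to exact attainment, i.e.\ to $u_j$ being a M\"obius image of $1$; both are routine.
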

Here and in the following, we use $\lesssim$ to indicate that the left side is bounded by the right side
up to a {universal constant (unless stated otherwise). In Lemma \ref{lem:hölder-infty} and \ref{lem:blowup} the constants are allowed to depend on the dimension.} The symbol $\gtrsim$ is defined analogously. 

Since the conformal factor has a negative exponent, the effect of {a}  M\"obius transformation on a function differs drastically. Therefore, we lack a global bound on the $W^{1,4}$-norm of smooth functions under M\"obius transformations as given in \cite[Lemma 6]{Frank2024b}, which is crucial for the proof of Proposition \ref{prop:distcomp}. However, we can use a blow-up criterion and a local version of \cite[Lemma 6]{Frank2024b} to overcome this problem; see Lemma \ref{lem:blowup}.

{The following definitions are stated for general dimensions $d\in \N$ as Lemma \ref{lem:blowup} remains valid in this generality.} We will use the same parametrization of M\"obius transformations by rotations $A\in O(d+1)$ elements $\xi\in B_1(0)$, the unit ball in $\R^{d+1}$, as in \cite{Frank2024b} or \cite{Frank2025}, given by
\begin{equation*}
	\Psi(\omega)\coloneqq A\Psi_\xi(\omega)\,,\qquad \Psi_\xi(\omega)
	\coloneqq \frac{(1-|\xi|^2)\omega -2(1-\xi\cdot\omega)\xi}{1-2\xi\cdot\omega + |\xi|^2}
	\,,\qquad \omega \in \mathbb S^d \,.
\end{equation*}
A short computation shows that
\begin{equation*}
	(J_{\Psi_\xi}(\omega))^\frac 1d = \frac{1-|\xi|^2}{1-2\xi\cdot\omega + |\xi|^2}\qquad \text{and}\qquad\Psi_\xi^{-1}(\omega) = \Psi_{-\xi}(\omega)\,, \qquad\omega\in\mathbb S^d\,.
\end{equation*}
    \begin{proof}
    We start with the almost orthogonality conditions \eqref{eq:orthorev}. For $|\tau|<1/2$ we have the universal bound
    $$|(1+\tau)^{-12}-1-\tau|\lesssim |\tau|^2\,.$$
    By uniform convergence, we have $|r_j|<1/2$ for {all} $j$ large enough, which we can assume after passing to a subsequence if necessary.
    After integrating over the sphere, the normalization condition gives
    $$\left|\int_{\mathbb S^3}r_j\,\mathrm d\omega\right|=\left|\|1+r_j\|_{-12}^{-12}-\|1\|_{-12}^{-12}+\int_{\mathbb S^3}r_j\,\mathrm d\omega\right| \lesssim \|r_j\|_2^2\,, $$ which verifies the first estimate in \eqref{eq:orthorev}. The proof for the second one is given in \cite[Lemma 10]{Frank2024b} and continues to hold for three dimensions.

      Since the bound \cite[Lemma 6]{Frank2024b} is not available, we have to prove \eqref{eq:distanceboundsrev} differently. Nevertheless, {the observation that the former estimates blow up in fact facilitates a more direct approach.} Let us quickly show how this is done.

  First, if we parametrize $\Psi=A\Psi_\xi$ as mentioned after Proposition \ref{prop:distcomp}, we can use $\Psi=A\Psi_\xi=\Psi_{A\xi}\circ A$ and a change of variables $\omega\mapsto A^{-1}\omega$ to find 
\begin{equation}\label{eq:inf}
   \inf_\Psi \|(u)_\Psi - 1 \|_{W^{1,p}}=\inf_\xi \|(u)_{\Psi_\xi}- 1 \|_{W^{1,p}}\, \end{equation}
 for every smooth $u>0$ and every $p \in [1, 4]$.

  As the quantity $\| (u)_{\Psi_\xi} - 1\|_{W^{1,p}}$ blows up by Lemma \ref{lem:blowup} for $|\xi|\to 1$, the previous infimum is attained.

   To prove the bound in \eqref{eq:distanceboundsrev}, we assume by contradiction that there are sequences $(u_j)\subset W^{1,4}(\Sph^3)$ and $(\Psi_j'),(\Psi_j'')$ of M\"obius transformations, which attain the infimum in \eqref{eq:inf} {with $u=u_j$ for $p=2$ and $p=4$,} respectively, such that
	$$
	\|(u_j)_{\Psi_j'} - 1 \|_{W^{1,4}} > j \|(u_j)_{\Psi_j''} - 1 \|_{W^{1,4}} \,.
	$$

    Parametrizing $(\Psi_j'')^{-1}\circ \Psi_j'$ by $\tilde A_j\in O(4)$ and $\tilde \xi_j\in B_1(0)$, we are going to show next that $\limsup_{j\to\infty}|\tilde \xi_j|<1$. Note that by assumption $\|(u_j)_{\Psi_j'} - 1 \|_{W^{1,2}}\lesssim \|(u_j)_{\Psi_j''} - 1 \|_{W^{1,4}}\to 0$ as $j\to\infty$. Hence, we know that {the sequences} $(\|(u_j)_{\Psi_j'}\|_{W^{1,2}})$ and $(\|(u_j)_{\Psi_j''}\|_{W^{1,4}})$ are bounded uniformly in $j$. If we bound $$\sup_j\|((u_j)_{\Psi_j'})_{((\Psi_j'')^{-1}\circ \Psi_j')^{-1}}\|_{W^{1,2}}=\sup_j\|(u_j)_{\Psi_j''}\|_{W^{1,2}}\lesssim \sup_j\|(u_j)_{\Psi_j''}\|_{W^{1,4}}<\infty\,,$$
    we deduce by Lemma \ref{lem:blowup} with $v_j=(u_j)_{\Psi_j'}$ and $\xi_j=-\tilde \xi_j$ that $\limsup_{j\to\infty}|\tilde \xi_j|<1$. The rotation $\tilde A_j$ can be removed once more by a change of variables.
    Therefore, we can apply the conformal bound locally with $\xi=\tilde \xi_j$ as in \eqref{eq:confbddloc2} {below} together with the assumption to obtain
that    \begin{equation*}
		\|(u_j)_{\Psi'_j}-1\|_{W^{1,4}} \gtrsim j 	\|(u_j)_{\Psi'_j}-(1)_{(\Psi''_j)^{-1}\circ \Psi'_j}\|_{W^{1,4}} \,. 
	\end{equation*}
	Observe that
	$$
	v_j\coloneqq \frac{(u_j)_{\Psi_j'}-1}{\|(u_j)_{\Psi_j'}-1\|_{W^{1,4}}}\,, \qquad\tilde v_j\coloneqq \frac{(u_j)_{\Psi'_j}-(1)_{(\Psi''_j)^{-1}\circ \Psi'_j}}{\|(u_j)_{\Psi_j'}-1\|_{W^{1,4}}}\,, \qquad\Delta_j\coloneqq v_j-\tilde v_j 
	$$ 
	satisfy, as $j\to\infty$,
	\begin{equation*}
		\|v_j\|_{W^{1,4}}=1\,,\qquad \|\tilde v_j\|_{W^{1,4}}\to 0\,, \qquad\|\Delta_j\|_{W^{1,4}}\to 1 \,.
	\end{equation*} Bounding $v_j$ in the $W^{1,2}$-norm as
	$$
	\|v_j\|_{W^{1,2}} = \frac{\inf_\Psi \|(u_j)_\Psi - 1 \|_{W^{1,2}}}{\|(u_j)_{\Psi_j'}-1\|_{W^{1,4}}} 
	\leq \frac{\|(u_j)_{\Psi''_j}-1\|_{W^{1,2}}}{\|(u_j)_{\Psi_j'}-1\|_{W^{1,4}}}\lesssim \frac{\|(u_j)_{\Psi''_j}-1\|_{W^{1,4}}}{\|(u_j)_{\Psi'_j}-1\|_{W^{1,4}}}\leq \frac{1}{j}\to 0
	$$ 
	gives
	\begin{equation*}
		\|\tilde v_j\|^2_{W^{1,2}}+2\langle \tilde v_j, \Delta_j\rangle_{W^{1,2}}+\|\Delta_j\|^2_{W^{1,2}}=	\| v_j\|^2_{W^{1,2}}\to 0 
	\end{equation*} as $j\to\infty$. As the notation suggests, $\langle \cdot,\cdot\rangle_{W^{1,2}}$ denotes the inner product in $W^{1,2}(\mathbb S^3)$. We know that $\|\tilde v_j\|_{W^{1,2}}\lesssim \|\tilde v_j\|_{W^{1,4}}\to 0$, so $\|\tilde v_j\|^2_{W^{1,2}}+2\langle \tilde v_j, \Delta_j\rangle_{W^{1,2}} \to 0$, and therefore
	$$
	\|\Delta_j\|^2_{W^{1,2}} \to 0 \,.
	$$
	
	 Up to the factor $\|(u_j)_{\Psi'_j}-1\|_{W^{1,4}}^{-1}$, the function $\Delta_j$ is given by $(1)_{(\Psi''_j)^{-1}\circ \Psi'_j} - 1$. Using a change of variables as before, we find that $$\|\Delta_j\|_{W^{1,p}}=\|(1)_{\Psi_{\tilde A_j\tilde \xi_j}} - 1\|_{W^{1,p}}$$ for $p=2,4$. We can think of the latter function as being a function of a variable $\tilde A_j\tilde \xi_j\in B_1(0)$.
	Since $\liminf_{j\to\infty}|\tilde A_j\tilde \xi_j|<1$, all norms of such functions of $\tilde A_j\tilde \xi_j$ are equivalent along the corresponding subsequence. Thus, the properties $\|\Delta_j\|_{W^{1,4}}\to 1$ and $\|\Delta_j\|_{W^{1,2}} \to 0$ contradict each other.
	\end{proof}

	\medskip
The following lemma is not restricted to $d=3$ and Sobolev exponent $4$. For general $d\in \N$ and $q>1$, the action of a M\"obius transformation $\Psi$ on a function $u$ can be written as $$(u)_{\Psi,q}\coloneqq (1)_{\Psi,q}u\circ\Psi \coloneqq J_\Psi^{\frac{d-q}{qd}}u\circ\Psi\,.$$
In the following lemma, we suppress the additional index $q$ for the sake of readability.

\begin{lemma}\label{lem:blowup} Let $d\in \N$, $p\geq 1$, and $q>d$ with $q\geq p$. For a sequence of smooth functions $(v_j)$ that is uniformly bounded in $W^{1,q}(\mathbb S^d)$ and satisfies $\|v_j\|_{-qd/(q-d)}^{-qd/(q-d)}=|\mathbb S^d|$, we have
$$\limsup_{j\to\infty}\|(v_j)_{\Psi_{\xi_j}}\|_{W^{1,p}}= \infty\qquad \text{if and only if}\qquad \limsup_{j\to\infty}|\xi_j|=1\,,$$ where $\Psi_{\xi_j}$ denotes the M\"obius transformation corresponding to $A=\mathbbm 1$ and $\xi_j\in B_1(0)$. Moreover,
\begin{equation}\label{eq:confbddloc2}
    \|(f)_{\Psi_{\xi_j}}\|_{W^{1,p}}\lesssim \|f\|_{W^{1,p}}
\end{equation}
holds for $\limsup_{j\to\infty}|\xi_j|<1$ and any smooth function $f$ on $\mathbb S^d$.
\end{lemma}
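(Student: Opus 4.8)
The plan is to establish the bound \eqref{eq:confbddloc2} first, because it immediately yields one of the two implications, and then to prove the converse by a compactness argument.

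For \eqref{eq:confbddloc2}, I would use that the hypothesis $\limsup_j|\xi_j|<1$ forces $\rho_0\coloneqq\sup_j|\xi_j|<1$. Since $|\omega-\xi|^2=1-2\xi\cdot\omega+|\xi|^2\geq(1-|\xi|)^2$ on $\Sph^d$, the maps $(\xi,\omega)\mapsto\Psi_\xi(\omega)$ and $(\xi,\omega)\mapsto(J_{\Psi_\xi}(\omega))^{1/d}=\frac{1-|\xi|^2}{|\omega-\xi|^2}$ are smooth on $\overline{B_{\rho_0}(0)}\times\Sph^d$, the latter bounded above and away from $0$; by compactness $\Psi_\xi$ is a diffeomorphism and $\|D\Psi_{\pm\xi}\|_\infty$, $\|J_{\Psi_{-\xi}}\|_\infty$, $\|(1)_{\Psi_\xi}\|_\infty$, $\|\nabla(1)_{\Psi_\xi}\|_\infty$ are all bounded by a constant $K=K(\rho_0,d,q)$ for $|\xi|\leq\rho_0$. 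Writing $(f)_{\Psi_\xi}=(1)_{\Psi_\xi}\,(f\circ\Psi_\xi)$ and applying the Leibniz and chain rules one gets $|(f)_{\Psi_\xi}|+|\nabla((f)_{\Psi_\xi})|\lesssim|f\circ\Psi_\xi|+|(\nabla f)\circ\Psi_\xi|$ pointwise, and a change of variables $\diff\omega=J_{\Psi_{-\xi}}\diff\omega'$ then gives $\|(f)_{\Psi_\xi}\|_{W^{1,p}}\lesssim\|f\|_{W^{1,p}}$, with constant depending only on $\rho_0,d,q,p$. Taking $f=v_j$ and using Hölder's inequality on $\Sph^d$ (here $q\geq p$ is needed), $\|(v_j)_{\Psi_{\xi_j}}\|_{W^{1,p}}\lesssim\|v_j\|_{W^{1,p}}\lesssim\|v_j\|_{W^{1,q}}$ stays bounded, which is the implication ``$\limsup_j|\xi_j|<1\Rightarrow\limsup_j\|(v_j)_{\Psi_{\xi_j}}\|_{W^{1,p}}<\infty$''.

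For the converse, suppose $\limsup_j|\xi_j|=1$ and, after passing to a subsequence, $|\xi_j|\to1$. The crucial observation I would make is that, along a further subsequence, $v_j$ converges uniformly to a \emph{strictly positive} limit: Morrey's embedding and the uniform $W^{1,q}$-bound give $\|v_j\|_{C^{0,\alpha}(\Sph^d)}\leq C$ with $\alpha\coloneqq1-d/q\in(0,1)$, so by Arzel\`a--Ascoli a subsequence satisfies $v_j\to v$ uniformly with $v\in C^{0,\alpha}(\Sph^d)$, $v\geq0$; if $v(\omega_0)=0$ then $0\leq v(\omega)\leq C\dist(\omega,\omega_0)^\alpha$ near $\omega_0$, hence $v^{-qd/(q-d)}\gtrsim\dist(\omega,\omega_0)^{-d}$ by the identity $\alpha\cdot\frac{qd}{q-d}=d$, which is not locally integrable; but Fatou's lemma together with the normalization gives $\int_{\Sph^d}v^{-qd/(q-d)}\diff\omega\leq\liminf_j\int_{\Sph^d}v_j^{-qd/(q-d)}\diff\omega=|\Sph^d|<\infty$, a contradiction. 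Hence $\min_{\Sph^d}v\eqqcolon2\delta_0>0$ and $v_j\geq\delta_0$ for all large $j$, so that $(v_j)_{\Psi_{\xi_j}}=(1)_{\Psi_{\xi_j}}\,v_j\circ\Psi_{\xi_j}\geq\delta_0\,(1)_{\Psi_{\xi_j}}$ pointwise and
\begin{equation*}
\|(v_j)_{\Psi_{\xi_j}}\|_{W^{1,p}}\geq\|(v_j)_{\Psi_{\xi_j}}\|_{p}\geq\delta_0\,\|(1)_{\Psi_{\xi_j}}\|_{p}\,.
\end{equation*}
Since $(1)_{\Psi_\xi}(\omega)=\big(|\omega-\xi|^2/(1-|\xi|^2)\big)^{(q-d)/q}$, a direct computation gives $\|(1)_{\Psi_\xi}\|_{p}^{p}=(1-|\xi|^2)^{-(q-d)p/q}\int_{\Sph^d}|\omega-\xi|^{2(q-d)p/q}\diff\omega$, and since $(q-d)p/q>0$ while $\xi\mapsto\int_{\Sph^d}|\omega-\xi|^{2(q-d)p/q}\diff\omega$ is continuous and strictly positive on the compact ball $\overline{B_1(0)}$, hence bounded below there, one obtains $\|(1)_{\Psi_{\xi_j}}\|_{p}\to\infty$. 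Thus $\|(v_j)_{\Psi_{\xi_j}}\|_{W^{1,p}}\to\infty$ along the subsequence, whence $\limsup_j\|(v_j)_{\Psi_{\xi_j}}\|_{W^{1,p}}=\infty$.

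I expect the only genuinely delicate point to be ruling out zeros of the uniform limit $v$: because $v_j$ is controlled only in $W^{1,q}$, i.e.\ in $C^{0,\alpha}$ with $\alpha=1-d/q$, and the normalization fixes precisely the \emph{critical} negative moment (the exponents satisfy $\alpha\cdot\frac{qd}{q-d}=d$), the singularity $\dist(\cdot,\omega_0)^{-d}$ is only borderline (logarithmically) non-integrable. In particular, no ``soft'' lower bound on $\inf_{\Sph^d}v_j$ uniform in $j$ is available, and one has to pass to the limit and use that a merely Hölder function cannot vanish while its $(-qd/(q-d))$-th power stays integrable; the remaining steps are routine manipulations of the explicit formula for $\Psi_\xi$ and its Jacobian.
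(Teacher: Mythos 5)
Your proposal is correct and takes essentially the same route as the paper: Morrey's embedding plus Arzelà--Ascoli give uniform convergence $v_j\to v$, the critical exponent identity $\bigl(1-\tfrac{d}{q}\bigr)\tfrac{qd}{q-d}=d$ together with Fatou's lemma and the normalization rules out zeros of $v$, and the bounded-Jacobian change of variables handles both the case $\limsup_j|\xi_j|<1$ and \eqref{eq:confbddloc2}. The only (harmless) difference is the last step of the blow-up direction, where you use the uniform lower bound $v_j\geq\delta_0$ and compute $\|(1)_{\Psi_{\xi_j}}\|_p\to\infty$ explicitly, while the paper instead restricts the integral to $\{|\omega-\xi|>\varepsilon\}$, on which $\Psi_{\xi_j}\to-\xi$ uniformly, and uses $v(-\xi)>0$.
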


\begin{proof} By Morrey's inequality and compact embedding of H\"older spaces, it is well-known that $v_j\to v\in C^{0,\gamma}(\mathbb S^d)$ with $\gamma\in (0,1-d/q)$. Before pursuing the proof of the statement, let us first show that $v>0$.

Assume by contradiction that $v(S)=0$ for some $S\in \mathbb S^d$. Since $(v_j)$ is bounded in $W^{1,q}(\mathbb S^d)$, we deduce by Morrey's inequality that
$$ |v_j(\omega)|{+o_{j\to\infty}(1)} = |v_j(\omega)-v_j(S)|\lesssim |\omega-S|^{1-\frac{d}{q}}\,,$$
for every $\omega \in \mathbb S^d$. 
Fatou's lemma then implies {that}
$$\liminf_{j\to \infty} \int_{\mathbb S^d} {|v_j(\omega)|}^{-\frac{qd}{q-d}}\,\mathrm d\omega
\gtrsim \int_{\mathbb S^d} |\omega-S|^{-d}\,\mathrm d\omega=\infty\,. $$
This, however, contradicts our normalization in the statement of the lemma. Therefore, $v>0$.

To prove the equivalence, we distinguish two cases. If $\limsup_{j\to\infty}|\xi_j|=1$, then along a subsequence $\xi_j\to\xi\in \mathbb S^d$. We also see that $\Psi_{\xi_j}(\omega)\to -\xi$ 
uniformly for all $\omega \in \mathbb S^d$ with $|\xi-\omega|>\varepsilon$
for arbitrary but fixed $\varepsilon>0$. Therefore, we can estimate
\begin{align}\notag
    \|(v_j)_{\Psi_{\xi_j}}\|^p_{W^{1,p}}&\geq\|(1)_{\Psi_{\xi_j}}v_j\circ\Psi_{\xi_j}\|^p_p\geq \left(\frac{\varepsilon^2}{1-|\xi_j|^2}\right)^{\frac p q (q-d)}\int_{\{|\xi_j-\omega|>\varepsilon\}}|v_j(\Psi_{\xi_j}(\omega))|^p\,\mathrm d\omega\\&= \left(\frac{\varepsilon^2}{1-|\xi_j|^2}\right)^{\frac p q (q-d)}|\{|\xi_j-\omega|>\varepsilon\}| \,| v(-\xi)|^p \,(1+o_{j\to\infty}(1))\,.\label{eq:blowupbound}
\end{align} The last step follows by using the uniform convergence of $v_j\to v$ on $\mathbb S^d$ and $\Psi_{\xi_j}(\omega)\to -\xi$ on $\{|\xi-\omega|>\varepsilon\}$. Since $v>0$, the second line in \eqref{eq:blowupbound} tends to $\infty$ as $j\to \infty$.

If $\limsup_{j\to\infty}|\xi_j|<1$, then we can bound the Jacobian $J_{\Psi_{\xi_j}}=(1)_{\Psi_{\xi_j}}^{-{qd/(q-d)}}$ of $\Psi_{\xi_j}$ from above and below by some positive, $j$-independent constant. Similarly, we can bound the derivative of the Jacobian from above. Note that $$|\nabla (v_j\circ \Psi_{\xi_j})|=|(d\Psi_{\xi_j})^T((\nabla v_j)\circ \Psi_{\xi_j})|= J_{\Psi_{\xi_j}}^{\frac1d}|(\nabla v_j)\circ \Psi_{\xi_j}|$$ by conformality of $\Psi_{\xi_j}$, where $(d(\Psi_{\xi_j})_\omega)^{\rm T}:T_{\Psi_{\xi_j}(\omega)}\Sph^d \to T_\omega\Sph^d$ denotes the adjoint of the map $d(\Psi_{\xi_j})_\omega:T_\omega\Sph^d \to T_{\Psi_{\xi_j}(\omega)}\Sph^d$ with respect to the given inner products on these spaces. Hence, the bounds on the Jacobian and the transformation formula give  $$\|(v_j)_{\Psi_{\xi_j}}\|^p_{W^{1,p}}\leq\|(1)_{\Psi_{\xi_j}}v_j\circ\Psi_{\xi_j}\|_{p}^p +\|\nabla (1)_{\Psi_{\xi_j}} v_j\circ\Psi_{\xi_j}\|_{p}^p+\left\|(1)^{-{\frac{d}{q-d}}}_{\Psi_{\xi_j}} (\nabla v_j)\circ\Psi_{\xi_j} \right\|^p_{p}\lesssim \|v_j\|_{W^{1,p}}^p$$ {with a constant depending on $\varepsilon$.} As $v_j$ is bounded uniformly in $j$ in $W^{1,p}(\mathbb S^d)$ by assumption, we proved the converse.

In addition, for the constant sequence $v_j=f$ this implies \eqref{eq:confbddloc2}.
\end{proof}

The first case in the proof actually shows that
$$\limsup_{j\to\infty}|\xi_j|=1\Rightarrow\limsup_{j\to\infty}\|(v_j)_{\Psi_{\xi_j}}\|_{p}=\infty\Rightarrow\limsup_{j\to\infty}\|(v_j)_{\Psi_{\xi_j}}\|_{W^{1,p}}=\infty\,,$$ so we can always use the $L^p$-norm instead of the $W^{1,p}$-norm in the equivalence in Lemma \ref{lem:blowup}.

\section{Local bound}
\label{sec:local}
In this section our goal is to prove the second step of the Bianchi--Egnell method. In contrast to the $\sigma_2$-curvature inequality for $d>4$ in \cite{Frank2024b}, our local analysis for \eqref{eq:ADT_intro_functional} does not suffer from a spectral gap issue, which streamlines the frequency decomposition. Moreover, our argument enjoys uniform convergence of the remainder $r_j$.
\begin{remark} \label{rem:sigma2ADT}
       As a consequence of Corollary \ref{cor:interpolation_stability}, we deduce stability for the $\sigma_2$-curvature inequality \eqref{eq:reverse_sigma2} as given in Theorem \ref{thm:revsigma}. However, we note that a local stability analysis of \eqref{eq:reverse_sigma2} would lead to Theorem \ref{thm:revsigma} directly and would give rise to an alternative proof (with a different stability constant) of Corollary \ref{cor:interpolation_stability}, and in particular Theorem \ref{thm:rev}. We omit the details.
  \end{remark}
  
\begin{proof}[Proof of Proposition \ref{prop:locrev}]
Proposition \ref{prop:distcomp} provides conformal transformations $(\Psi_j)$ such that $$(u_j)_{\Psi_j}=1+r_j$$ with $r_j\to 0$ in $W^{1,4}(\mathbb S^d)$ and {the approximate} orthogonality conditions \eqref{eq:orthorev}. Thanks to the conformal invariance of \eqref{eq:ADT_intro_functional}, we can drop the transformations $\Psi_j$ in the following. 

We first expand our functional inequality in terms of $r_j$. For this purpose, let us set
$$E_2[u]\coloneqq \int_{\mathbb S^3} \left(\frac34 u^4-64 \left(\sigma_1(u)+\frac{1}{2} |\nabla u|^2+\frac{1}{32} u^2\right) |\nabla u|^2\right) \,\mathrm d\omega\,, $$
$$E_1[u^{-2}]\coloneqq 2\int_{\mathbb S^3} \left(|\nabla (u^{-2})|^2+\frac34 (u^{-2})^2\right)\,\mathrm d\omega = 8\int_{\mathbb S^3} u^{-6}\left(|\nabla u|^2+\frac3{16} u^{2}\right)\,\mathrm d\omega$$ 
for $u\in C^\infty(\mathbb S^3)$.
 Since $r_j\to 0$ uniformly and $1+r_j>0$, we can expand $u_j^p$, $p\in\R\setminus\{0\}$, under the integral sign to arbitrary order in $r_j$. A similar argument holds for $|\nabla u_j|^2u_j^p$ as $u_j$ is bounded in $W^{1,2}(\mathbb S^3)$. 

Expanding to fourth order leads to 
\begin{align*} \|u_j\|_{-12}^{-12}=&\int_{\mathbb S^3}\left(1-12r_j+78r_j^2
    \right)\,\mathrm d\omega+\mathcal O(\|r_j\|_2^2\|r_j\|_\infty+\|r_j\|_\infty^5)\,,
   \\  E_1[u_j^{-2}]=& \ 8\int_{\mathbb S^3}\left(|\nabla r_j|^2+\frac 3{16}\left(1-4r_j+10r_j^2
   \right)\right)\,\mathrm d\omega+\mathcal O(\|r_j\|_{W^{1,2}}^2\|r_j\|_\infty+\|r_j\|_\infty^5)\,.
\end{align*}
Note that we used $\|r_j\|^3_3\leq \|r_j\|^2_2\|r_j\|_\infty$ and $\|r_j\|^4_4\leq \|r_j\|^2_2\|r_j\|_\infty^2$ to dismiss third and fourth order terms in $r_j$ here.
Taking the appropriate powers and using $|\int_{\mathbb S^3} r_j\,\mathrm d\omega|\lesssim \|r_j\|_2^2$ gives
\begin{align*}
 \|u_j\|_{-12}^{12}=&\ \frac{1}{|\mathbb S^3|}\bigg(1-
 \frac{1}{|\mathbb S^3|}\int_{\mathbb S^3}\left(-12r_j+78r_j^2
 \right)\,\mathrm d\omega
 \bigg)
+\mathcal O(\|r_j\|_2^2\|r_j\|_\infty+\|r_j\|_\infty^5)\,,
  \\  E_1[u_j^{-2}]^2=& \frac{9}{4}|\mathbb S^3|^2\left(1
    +\frac{2}{|\mathbb S^3|}\int_{\mathbb S^3}\left(\frac{16}{3}|\nabla r_j|^2-4r_j+10r_j^2
    \right)\,\mathrm d\omega\right)+\mathcal O(\|r_j\|_{W^{1,2}}^2\|r_j\|_\infty+\|r_j\|_\infty^5)\,.
\end{align*}
Multiplying both expansions, we find
\begin{align*}
   E_1[u_j^{-2}]^2\|u_j\|_{-12}^{12} = &\ 
    24\|\nabla r_j\|_2^2
    +\frac{9}{4}\bigg(|\mathbb S^3|+\int_{\mathbb S^3}\left(4r_j-58r_j^2
    \right)\,\mathrm d\omega
 \bigg)+\mathcal O(\|r_j\|_{W^{1,2}}^2\|r_j\|_\infty+\|r_j\|_\infty^5)\,.
\end{align*}
Along the same lines, we compute
\begin{align*}E_2[u_j]&\coloneqq \frac{3}{4}|\mathbb S^3|+\int_{\mathbb S^3} \left(3r_j +\frac{9}{2}r_j^2-64 \,\sigma_1(1+r_j)|\nabla r_j|^2-32|\nabla r_j|^4-2 |\nabla r_j|^2\right)  \,\mathrm d\omega\\&+\mathcal O(\|r_j\|_{W^{1,2}}^2\|r_j\|_\infty)\,, \end{align*}
Using $\sigma_1(1+r_j)>0$, we eventually obtain
\begin{equation}\label{eq:lowerbound}
     \frac{1}{3}E_1[u_j^{-2}]^2\|u_j\|_{-12}^{12}-E_2[u_j]
    \geq \ 10\|\nabla r_j\|_2^2-48\|r_j\|_2^2
    +32\|\nabla r_j\|_4^4+\mathcal O(\|r_j\|_{W^{1,2}}^2\|r_j\|_\infty+\|r_j\|_\infty^5)\,.
\end{equation}

Our final goal is to bound the right side from below by \begin{equation}\label{eq:lowerbound2}
(\|r_j\|_{W^{1,2}}^2+\|r_j\|_{W^{1,4}}^4)(1+o(1))
\end{equation} 
up to multiplication by a constant. Indeed, $r_j=(u_j)_{\Psi_j}-1$ is a competitor for the infimum in Proposition \ref{prop:locrev}, so we can therefore conclude the desired local bound
\begin{equation*}
	\liminf_{j\to\infty} \frac{F_2[1] F_1[1]^{-2} - F_2[u_j]F_1[u_j^{-2}]^{-2}}{\inf_\Psi \left( \| (u_j)_\Psi - 1 \|_{W^{1,2}}^2 + \| (u_j)_\Psi - 1 \|_{W^{1,4}}^4 \right)} 
\gtrsim\liminf_{j\to\infty}\frac{\frac{1}{3}E_1[u_j^{-2}]^2\|u_j\|_{-12}^{12}-E_2[u_j]}{\| r_j\|_{W^{1,2}}^2
    +\|r_j\|_{W^{1,4}}^4}
    \gtrsim 1
\end{equation*}
for $j$ large enough using $\| u_j \|_{-12}^{-12}=|\Sph^3|$.
Note that the error in \eqref{eq:lowerbound} is of lower order since by Morrey's inequality
$$\frac{\|r_j\|_{W^{1,2}}^2\|r_j\|_\infty+\|r_j\|_\infty^5}{\| r_j\|_{W^{1,2}}^2+\|r_j\|_{W^{1,4}}^4}\lesssim \|r_j\|_\infty\to 0$$ as $j\to \infty$.

To find a lower bound of the form \eqref{eq:lowerbound2}, we study the quadratic term in \eqref{eq:lowerbound} given  by $$Q[r_j]\coloneqq 10\|\nabla r_j\|_2^2-48\|r_j\|_2^2\,.$$
Thanks to Proposition \ref{prop:distcomp}, $r_j$ is approximately orthogonal to the spherical harmonics of degree $0$ and $1$, and thus $Q$ admits a spectral gap (up to terms of higher order) that guarantees a lower bound of the form $Q[r_j]\gtrsim\|r_j\|_{W^{1,2}}^2+\mathcal O(\|r_j\|_{W^{1,2}}^2\|r_j\|_\infty+\|r_j\|_\infty^5)$.

More concretely, we split $r_j$ into spherical harmonics of degree $0$, $1$, and $\geq 2$, that is, 
$$r_j=r_j^{(0)}+r_j^{(1)}+\tilde r_j$$
with
$$
r_j^{(0)} \coloneqq \Pi_0 r_j= \frac{1}{|\mathbb S^3|}\langle r_j, 1 \rangle_2\,,\qquad
r_j^{(1)} \coloneqq \Pi_1 r_j=\frac{4}{|\mathbb S^3|}\sum_{i=1}^4\langle r_j, \omega_i \rangle_2 \omega_i\,,
\qquad
\tilde r_j \coloneqq \sum_{\ell=2}^\infty \Pi_\ell r_j \,,
$$ where $\langle \cdot,\cdot\rangle_2$ denotes the inner product in $L^2(\mathbb S^3)$ and $\Pi_\ell$ the $L^2(\mathbb S^3)$-orthogonal projection onto spherical harmonics of degree $\ell \in \N_0$. Recall that $(-\Delta) f=l(l+2)f$ for spherical harmonics of degree $l$ in three dimensions. For more details on spherical harmonics, we refer to \cite[p.~137--152]{Stein1990}. Due to the orthogonality of spherical harmonics, $Q[r_j]$ disintegrates into  $Q[r_j^{(0)}]+Q[r_j^{(1)}]+Q[\tilde r_j]$. Then we apply approximate orthogonality, Proposition \ref{prop:distcomp}. First, degree $0$ gives
$$Q[r_j^{(0)}]=-48|\mathbb S^3|^{-1}\langle r_j, 1 \rangle_2^2=\mathcal O(\|r_j\|_2^4)\,.$$
Next, as $(-\Delta) \omega_i=3\omega_i$, $i=1,\dots,4$, degree $1$ gives
$$Q[r_j^{(1)}]=
-\frac{288}{|\mathbb S^3|^2}\sum_{i=1}^4 \langle r_j, \omega_i \rangle_2^2\|\omega_i\|_2^2
=\mathcal O(\|r_j\|_{W^{1,2}}^4)\,.$$
Finally, we apply the spectral gap of the Laplacian on $\mathbb S^3$ for spherical harmonics of degree $2$ and higher, which gives
$$Q[\tilde r_j]\geq \frac{32}9\|\tilde r_j\|_{W^{1,2}}^2\,. $$

Recovering the missing frequencies in the form of constant multiples of $\|r_j^{(0)}\|_{W^{1,2}}^2$, $\|r_j^{(1)}\|_{W^{1,2}}^2$, and $\|r_j\|_{4}^4$ from the error term in \eqref{eq:lowerbound}, it follows that
\begin{equation*}
     \frac{1}{3}E_1[u_j^{-2}]^2\|u_j\|_{-12}^{12}-E_2[u_j]
    \geq \frac{32}9\| r_j\|_{W^{1,2}}^2
    +32\|r_j\|_{W^{1,4}}^4+\mathcal O(\|r_j\|_{W^{1,2}}^2\|r_j\|_\infty+\|r_j\|_\infty^5)
    \,.
\end{equation*}
Since the right side is of the form \eqref{eq:lowerbound2}, this completes the proof.
\end{proof}




\newcommand{\etalchar}[1]{$^{#1}$}
\providecommand{\bysame}{\leavevmode\hbox to3em{\hrulefill}\thinspace}
\providecommand{\MR}{\relax\ifhmode\unskip\space\fi MR }
\providecommand{\MRhref}[2]{
	\href{http://www.ams.org/mathscinet-getitem?mr=#1}{#2}
}
\providecommand{\href}[2]{#2}

\end{document}